\newcommand{\K}{\mathcal{K}}
\newcommand{\h}{\mathcal{H}}
\newcommand{\Complex}{\mathbb{C}}
 \newcommand{\To}{\longrightarrow}
 \newcommand{\N}{\mathbb{N}}
 \newcommand{\ip}[2][\cdot\hs]{\langle #1,#2\rangle}
 \newcommand{\hs}{{\hspace{1pt}}}
\renewcommand{\[}{\begin{equation}}
\renewcommand{\]}{\end{equation}}
   \newcommand{\fN}{\mathfrak{N}}
\newtheorem{theorem}{Theorem}[section]
\newtheorem{corollary}[theorem]{Corollary}
\newtheorem{proposition}[theorem]{Proposition}
\newtheorem{definition}[theorem]{Definition}
\newtheorem{remark}[theorem]{Remark}
\newtheorem{example}[theorem]{Example}
\numberwithin{equation}{section}
\begin{document}


\title[***]
{Construction, Extension and Coupling of Frames on Finite Dimensional Pontryagin Space. \\
***}

 \author{Escobar German }
\address{Escobar German, Programa  de Matem\'atica aplicada, Fac. Ciencias Exactas y Naturales, Universidad Surcolombiana, Neiva, Colombia.}
\email{german.escobar@usco.edu.co}

\author[Esmeral Kevin]{Esmeral Kevin}
\address{Esmeral Kevin. Departamento de Matem\'aticas \\Centro de investigaci\'on y de estudios avanzados del Instituto Polit\'ecnico Nacional\\(CINVESTAV-IPN), M\'exico.}
 \email{kmesmeral@math.cinvestav.mx, matematikoua@gmail.com}

\author{Ferrer Osmin}
\address{O. Ferrer, Programa  de Matem\'atica, Fac. Educaci\'on, Universidad Surcolombiana, Neiva, Colombia. }
\email{osmin.ferrer@usco.edu.co}

\subjclass[2010]{Primary *** 42C15,
Secondary *** 47B50, *** 46C20}

\date{\today}


\keywords{Frames, Krein space, Pontryagin space, Similar Frames. }

\begin{abstract}

\noindent In this paper we extend to finite-dimensional Pontryagin spaces the methods used in \cite{CasazzaLeon,Deguang} to build frames from an adjoint and positive operator. It is proved
that any frame in finite dimensional Pontryagin space $\K$ is $J$-orthogonal projection of a frame for a space $\h$ such that $\K\subset\h$. Furthermore, given
$\{k_{n}\}_{n=1}^{m}$ and $\{x_{n}\}_{n=1}^{k}$ frames for $\K$ and $\h$ respectively, we build a finite-dimensional Pontryagin space $\Re$ and a frame $\{y_{n}\}_{n=1}^{N}$ for $\Re$ such that $\K,\h\subset\Re$ and $\{k_{n}\}_{n=1}^{m},\{x_{n}\}_{n=1}^{k}\subset \{y_{n}\}_{n=1}^{N}$.
\end{abstract}

\maketitle


\section*{Introduction}

From its appearance  in \cite{DS} the theory of frames in Hilbert space has been quickly developed \cite{Casazza, CasazzaLeon, Dau,Deguang,G, RNG}, unlike the theory of frames in Krein space which is giving its firts steps, \cite{POK, KEFER, GMMM, GMMMa, PW}. In \cite{KEFER} a family $\{k_{n}\}_{n\in\N}$  is a frame for a Krein space if there exist constants $A,B>0$  such that

\begin{equation*}
  A\|k\|_{J}^{2}\leq\sum_{n\in\N}|[k,k_{n}]|^{2}\leq\,B\|k\|_{J}^{2},\quad\forall k\in\mathcal{K},
\end{equation*}

In \cite{GMMM} and \cite{PW}  alternative definitions are proposed. The fundamental idea is to use the versatility and flexibility of frames.\\
In \cite{CasazzaLeon} and \cite{Deguang} we find methods to build and extend frames in finite-dimensional Hilbert space. Based on \cite{KEFER} the main purpose of this work
is to understand and extend these results to finite-dimensional Krein space called Pontryagin space. It is further proved that if $\{k_{n}\}_{n=1}^{m}$ y $\{x_{n}\}_{n=1}^{k}$ are frames
for the Krein space $\K$ y $\h$ respectively then it is possible coupling those families, where by coupling it will be understood, find a Krein space $\Re$ con $\K,\h\subset\Re$ and a frame  $\{y_{n}\}_{n\in\N}$ such that $\{k_{n}\}_{n=1}^{m},\{x_{n}\}_{n=1}^{k}\subset \{y_{n}\}_{n=1}^{N}$.

To achieve our objective, in section \ref{preliminares}  basic aspects of the finite-dimensional Pontryain space and the theory of frames in fnite-dimensional krein space are given.
In section \ref{resultadosprincipales} we find the main results of this paper, we build finite frames from a positive and adjoint operator in the Pontryagin space
(subsection \ref{construccion}). We extend frames to bigger spaces (subsection \ref{extension}), and finally we couple two frames keep on the condition of frame. That is, we build a
frame for the Pontyagrin space which contains both frames (subsection \ref{acoplamiento}).

\section{Preliminaries}\label{preliminares}

\begin{definition}[\textit{Krein Spaces}]
Let $\mathcal{K}$ be a vector space on $\Complex$. Consider $[\cdot,\cdot]:\,\mathcal{K}\times\mathcal{K}\longrightarrow\Complex$,  a  sesquilinear form.  The vector space $\left(\mathcal{K},\,[\cdot,\cdot]\right)$ is a \emph{Krein space} whether $\mathcal{K}=\mathcal{K}^{+}\oplus\mathcal{K}^{-}$ and  $\left(\mathcal{K}^{+},\,[\cdot,\cdot]\right)$, $\left(\mathcal{K}^{-},\,-[\cdot,\cdot]\right)$ are Hilbert spaces, where $\mathcal{K}^{+}$, $\mathcal{K}^{-}$ are orthogonal with respect $[\cdot,\cdot]$.
\end{definition}
On $\mathcal{K}$ define the following scalar product
$$(x_{1},x_{2})=[x_{1}^{+},x_{2}^{+}]-[x_{1}^{-},x_{2}^{-}],\; x_{i}^{\pm}\in\mathcal{K}^{\pm},\; x_{i}=x_{i}^{+}+x_{i}^{-}.$$
This scalar product makes $\left(\mathcal{K},\,(\cdot,\cdot)\right)$  a Hilbert space, which is so-called \emph{Hilbert space associated to }$\mathcal{K}.$ Hence, we can take the orthogonal projections on $\mathcal{K}^{+}$ and $\mathcal{K}^{-}$ denoted $P^{+}$ and $P^{-}$ respectively. The linear bounded operator $J=P^{+}-P^{-}$ is called \emph{Fundamental Symmetry}, and it satisfies the equality $[x,y]=(Jx,y),\;\forall x,\,y\in\mathcal{K}.$ Equivalently
\begin{equation}\label{jnorma}
[x,y]_{J}=[Jx,y]=(x,y),\quad\text{and denote} \quad\|x\|_{J}=\sqrt{[x,x]_{J}}\,\forall x,\,y\in\mathcal{K}.
\end{equation}
\begin{definition}
Let $(\mathcal{K},[\cdot,\cdot])$ be a Krein space. Consider $x,y\in\mathcal{K}$,  we say that $x$ is orthogonal to $y$ if $[x,y]_{J}=0$, and is denoted by $x\perp y$. We say that $x$ is $J$-\emph{orthogonal} to $y$  if $[x,y]=0$, and is denoted by $x[\perp]y$.
\end{definition}
The main purpose in this paper is to study frames in  $N$-dimensional Krein spaces $\K$.  Therefore $\operatorname{dim}\K^{+},\operatorname{dim}\K^{-}\leq N$, these spaces are called \emph{Pontryagin spaces}. More details see \cite{Azizov, Bognar}.
\begin{definition}\cite{Azizov, Bognar}
  A Krein space $(\K,[\cdot,\cdot])$ such that
  \begin{equation}
   0< \aleph=\min\{\operatorname{dim}\K^{+},\operatorname{dim}\K^{-}\}<+\infty
  \end{equation}
 is called a \emph{Pontryagin space}.
\end{definition}

\begin{definition}\cite{Azizov, Bognar}
Let $\mathcal{K}$ be a Krein space, and let $V$ be a closed subspace of $\mathcal{K}$. The subspace
  \begin{equation}\label{Vperp}
V^{[\perp]}=\left\{x\in\mathcal{K}: [x,y]=0,\,\text{for all $y\in\,V$}\right\}
\end{equation}
 is  so-called the $J$-\emph{orthogonal complement of $V$ with respect $[\cdot,\cdot]$} (or simply $J$-\emph{orthogonal complement} of $V$).
\end{definition}
\begin{definition}\cite{Azizov, Bognar}
A closed subspace $V$ of $\mathcal{K}$  such that $V\cap V^{[\perp]}=\{0\}$ and $V+ V^{[\perp]}=\mathcal{K}$, where $V^{[\perp]}$ is given in \eqref{Vperp}
 is called \emph{projectively complete}.
\end{definition}
\begin{proposition}(\cite{Azizov,Bognar}) Let $(\K,[\cdot,\cdot])$ be a Pontryagin space $k$-dimensional. Every  closed subspace $V\subset\K$ is projectively complete. Furthermore, is itself a Pontryagin space with dimension $0\leq\,k^{\prime}\leq\,k$.
\end{proposition}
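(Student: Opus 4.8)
The plan is to transport everything to the associated Hilbert space $(\K,(\cdot,\cdot))$ by means of the fundamental symmetry $J$, where orthogonal complements and dimensions are transparent, and only at the end to translate back to the indefinite form $[\cdot,\cdot]$. First I would describe $V^{[\perp]}$ explicitly. Using $[x,y]=(Jx,y)$, a vector $x$ lies in $V^{[\perp]}$ exactly when $(Jx,y)=0$ for every $y\in V$, i.e. when $Jx$ is Hilbert-orthogonal to $V$. Writing $V^{\perp}$ for the orthogonal complement of $V$ in $(\K,(\cdot,\cdot))$ and using $J^{2}=I$, this yields the identity $V^{[\perp]}=J(V^{\perp})$. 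Since $J$ is a unitary involution, hence a linear bijection of $\K$, we get $\dim V^{[\perp]}=\dim V^{\perp}=k-\dim V$, and therefore $\dim V+\dim V^{[\perp]}=k=\dim\K$.

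Next I would reduce the two defining conditions of projective completeness to a single one. By the Grassmann dimension formula, $\dim(V+V^{[\perp]})=\dim V+\dim V^{[\perp]}-\dim(V\cap V^{[\perp]})=k-\dim(V\cap V^{[\perp]})$. Hence $V+V^{[\perp]}=\K$ holds if and only if $V\cap V^{[\perp]}=\{0\}$, so the whole statement comes down to proving that this intersection is trivial.

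This is where I expect the real difficulty, and the real subtlety, to lie. The subspace $V\cap V^{[\perp]}$ is precisely the isotropic (degenerate) part of $V$: a vector $z$ belongs to it iff $z\in V$ and $[z,y]=0$ for all $y\in V$. Triviality of this intersection is therefore \emph{equivalent} to non-degeneracy of $V$, and this is not automatic for an indefinite form — a neutral vector with $[z,z]=0$ cannot be ruled out by positivity alone (already a neutral line in a space of signature $(1,1)$ coincides with its own $J$-orthogonal complement). So the argument must enter the Pontryagin hypothesis here, and I would expect the intended reading to carry non-degeneracy of $V$ either explicitly or through the way the proposition is applied; the content that the finite-dimensional Pontryagin setting supplies is that non-degeneracy of $V$ \emph{already forces} projective completeness, which then follows at once from the dimension count of the first two paragraphs. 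I would make this step precise by identifying $V\cap V^{[\perp]}$ with the radical of the restricted form and showing it vanishes under the operative hypotheses.

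Finally, for the structural claim, once $V\cap V^{[\perp]}=\{0\}$ is in hand the restricted form $[\cdot,\cdot]|_{V\times V}$ is a non-degenerate Hermitian form on the finite-dimensional space $V$. Diagonalizing it (Sylvester's law of inertia, equivalently the spectral decomposition of the self-adjoint operator that represents the form in the Hilbert inner product) produces an orthogonal splitting $V=V^{+}\oplus V^{-}$ on which $[\cdot,\cdot]$ is positive and negative definite respectively. This exhibits $V$ as a finite-dimensional Krein space, hence a Pontryagin space, of dimension $k'=\dim V$, with $0\le k'\le k$ immediate since $V\subset\K$.
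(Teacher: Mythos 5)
Your reduction to the triviality of $V\cap V^{[\perp]}$ is correct, and the suspicion in your third paragraph is not a gap in your argument but a flaw in the statement itself: as printed, the proposition is false, and no proof can close the step you flagged. Take $\K=\Complex^{2}$ with $[z,w]=z_{1}\overline{w_{1}}-z_{2}\overline{w_{2}}$, a Pontryagin space with $\dim\K^{+}=\dim\K^{-}=1$, and $V=\operatorname{span}\{(1,1)\}$. Since $[x,(1,1)]=x_{1}-x_{2}$, one gets $V^{[\perp]}=V$, hence $V\cap V^{[\perp]}=V\neq\{0\}$, $V+V^{[\perp]}=V\neq\K$, and the restricted form on $V$ vanishes identically; so $V$ is neither projectively complete nor a Pontryagin space. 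Your identification of $V\cap V^{[\perp]}$ with the radical of the restricted form is exactly right: in any indefinite space neutral subspaces exist, and they are precisely the degenerate ones, so the claim cannot hold for \emph{every} subspace.

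For comparison: the paper supplies no proof of this proposition at all; it is quoted from \cite{Azizov,Bognar}, where the conclusion is drawn under the hypothesis that $V$ be non-degenerate (equivalently $V\cap V^{[\perp]}=\{0\}$), a hypothesis silently dropped here. Once it is restored, your argument is a complete proof and is the standard one: $V^{[\perp]}=JV^{\perp}$ (the paper's own formula \eqref{subspacepkpk}) gives $\dim V+\dim V^{[\perp]}=k$, the Grassmann identity converts non-degeneracy into $V+V^{[\perp]}=\K$ (this is where finite-dimensionality is essential; in infinite-dimensional Krein spaces non-degeneracy alone does not suffice), and Sylvester's law yields $V=V^{+}[\dotplus]V^{-}$, exhibiting $V$ as a Pontryagin space of dimension $k'=\dim V\leq k$. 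Note also that the omission is not harmless downstream: Theorem \ref{levantamiento2} applies this proposition to $V=\left(\operatorname{Rang}T^{[*]}\right)^{[\perp]}$, a subspace that need not be non-degenerate, so the correction you identified propagates into the paper's main construction.
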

\begin{remark}
 In \cite{Azizov} shown's that for any closed subspace $V$   its $J$-orthogonal complement $V^{[\perp]}$ and its orthogonal complement $V^{\perp}$ are closed subspaces connected by the formulas
  \begin{align}
    V^{[\perp]}=JV^{\perp},\quad   V^{\perp}=JV^{[\perp]}, \quad   (JV)^{[\perp]}=JV^{[\perp]}\label{subspacepkpk}.
  \end{align}
  By \eqref{subspacepkpk} we note that $JV$  is projectively complete if and only if $V$ is projectively complete. In addition, the condition $V\cap V^{[\perp]}=\{0\}$ tells that every $x\in\mathcal{K}$ has an unique  $J$-orthogonal projection on $V$, see \cite{Azizov,Bognar}.
\end{remark}
\begin{remark}\label{remarkJVcV}
Let $(\mathcal{K},[\cdot,\cdot])$ be a Krein space. Let $V$ be a closed subspace of $\mathcal{K}$ which is projectively complete. Then, $V=\overline{V}=\left(V^{[\perp]}\right)^{[\perp]}$, this implies that $(V,[\cdot,\cdot])$ is a Krein space. Hence $V=V^{+}[\dotplus]\,V^{-}$, where $V^{+}\subset\mathcal{K}^{+}$, and $V^{-}\subset\mathcal{K}^{-}$. Thus $JV\subset V$.
\end{remark}
\begin{definition}\cite{Azizov,Bognar}
  Let $\mathcal{K}$ be a Krein space. A system vector $\{e_{i}\}_{i\in I}\subset \mathcal{K}$, where $I$ is an arbitrary set of indices is called a $J$-\emph{orthonormalized system} if $[e_{i},e_{j}]=\pm\delta_{i,j}$ for all $i,j\in I$, where $\delta_{i,j}$ is the Kronecker delta.
\end{definition}
\begin{example}
   The simplest example of an $J$-orthonormalized system of $\mathcal{K}$ is  the union of two arbitrary orthonormalized (in the usual sense) systems from the subspaces $\mathcal{K}^{+}$ and $\mathcal{K}^{-}$ respectively.
\end{example}
\begin{definition}\cite{Azizov,Bognar}
A $J$-orthonormalized system is said to be \emph{maximal} if it is not contained in any wider $J$-orthonormalized system, and to be $J$-\emph{complete} if there is no non-zero vector $J$-orthonormalized to this system.
\end{definition}
\begin{definition}\cite{Azizov,Bognar}
  Let $(\mathcal{K},[\cdot,\cdot])$ be a Krein space. An $J$-\emph{orthonormalized basis} in $\mathcal{K}$ is an  $J$-orthonormalized system which is $J$-complete and maximal in $\mathcal{K}$.
\end{definition}
\begin{theorem}(\cite{Azizov,Bognar})\label{teoremabasekrein}
  let $\{e_{n}\}_{n\in\N}$ be $J$-orthonormal system in the Krein space $\left(\K,[\cdot,\cdot]\right)$. The following conditions are equivalent.
  \begin{enumerate}
    \item [$i).$] $\K$ admits a fundamental decomposition $\K=\K^{+}[\dotplus]\K^{-}$ such that the $e_{n}$'s with $[e_{n},e_{n}]=1$  belong to $\K^{+}$, and the $e_{n}$'s with $[e_{n},e_{n}]=-1$ belong to $\K^{-}$.
     \item [$ii).$]   \begin{equation}\label{igualdaden}
      x=\sum_{n\in\N}[e_{n},e_{n}][x,e_{n}]e_{n},\quad\forall\,x\in\K.
      \end{equation}
  \end{enumerate}
\end{theorem}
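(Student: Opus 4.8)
The plan is to transfer both statements into the associated Hilbert space $(\K,(\cdot,\cdot))$, where the standard theory of orthonormal bases applies, and to use the fundamental symmetry $J$ as the dictionary between the two inner products. Writing $\varepsilon_{n}=[e_{n},e_{n}]\in\{\pm1\}$, the computational heart of the argument is the identity $\varepsilon_{n}[x,e_{n}]=(x,e_{n})$, which I expect to hold precisely when the decomposition in $(i)$ is adapted to the signs of the $e_{n}$. Once this identity is in hand, condition $(ii)$ becomes literally the Parseval reconstruction $x=\sum_{n}(x,e_{n})e_{n}$ for the system $\{e_{n}\}$ in the Hilbert space, and the whole equivalence reduces to the familiar Hilbert-space dichotomy between orthonormal systems and orthonormal bases.

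For the implication $(i)\Rightarrow(ii)$ I would first fix the fundamental decomposition supplied by $(i)$, form $J=P^{+}-P^{-}$, and recall the induced scalar product $(x,y)=[Jx,y]$ together with $J=J^{*}$ and $J^{2}=I$. Using that $Je_{n}=e_{n}$ when $\varepsilon_{n}=1$ and $Je_{n}=-e_{n}$ when $\varepsilon_{n}=-1$, together with the $[\cdot,\cdot]$-orthogonality of $\K^{+}$ and $\K^{-}$, a short case check gives $(e_{i},e_{j})=\delta_{i,j}$; that is, $\{e_{n}\}$ is orthonormal in the associated Hilbert space. The same bookkeeping yields $[x,e_{n}]=(Jx,e_{n})=(x,Je_{n})$ and hence $\varepsilon_{n}[x,e_{n}]=(x,e_{n})$ in both sign cases. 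It then remains to know that $\{e_{n}\}$ is complete in $(\K,(\cdot,\cdot))$, after which the usual expansion $x=\sum_{n}(x,e_{n})e_{n}$ rewrites verbatim as $(ii)$.

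For $(ii)\Rightarrow(i)$ I would build the decomposition by hand: set $\K^{+}=\overline{\operatorname{span}}\{e_{n}:\varepsilon_{n}=1\}$ and $\K^{-}=\overline{\operatorname{span}}\{e_{n}:\varepsilon_{n}=-1\}$. The reconstruction formula $(ii)$ immediately gives $\K=\K^{+}+\K^{-}$, since each $x$ splits as $x=\sum_{\varepsilon_{n}=1}[x,e_{n}]e_{n}-\sum_{\varepsilon_{n}=-1}[x,e_{n}]e_{n}$, with the first sum in $\K^{+}$ and the second in $\K^{-}$. For $u=\sum_{\varepsilon_{i}=1}c_{i}e_{i}\in\K^{+}$ the $J$-orthonormality yields $[u,u]=\sum_{i}|c_{i}|^{2}\ge 0$, so $[\cdot,\cdot]$ is positive definite on $\K^{+}$, and symmetrically $-[\cdot,\cdot]$ is positive definite on $\K^{-}$; mutual $[\cdot,\cdot]$-orthogonality follows from $[e_{i},e_{j}]=\pm\delta_{i,j}=0$ whenever $\varepsilon_{i}\ne\varepsilon_{j}$. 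Finally $\K^{+}\cap\K^{-}=\{0\}$, because a common vector $z$ satisfies both $[z,z]\ge 0$ and $[z,z]\le 0$, forcing $z=0$ by definiteness. This exhibits the fundamental decomposition $\K=\K^{+}[\dotplus]\K^{-}$ demanded in $(i)$, with the $e_{n}$ placed according to their signs.

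The step I expect to be delicate is completeness. In the finite-dimensional Pontryagin setting the closed spans are automatically Hilbert spaces and all sums are finite, so convergence is free; but to make $(i)\Rightarrow(ii)$ go through one must still argue that the $J$-complete, maximal system $\{e_{n}\}$ is complete in $(\K,(\cdot,\cdot))$. Here the identity $\varepsilon_{n}[x,e_{n}]=(x,e_{n})$ pays off again: a vector $x$ is $J$-orthogonal to every $e_{n}$ exactly when $(x,e_{n})=0$ for every $n$, so $J$-completeness of the system coincides with its completeness in the associated Hilbert space, closing the gap. In the general infinite-dimensional Krein case the same scheme applies provided one additionally verifies norm-convergence of the series and closedness of the two spans, which is where the genuine analytic content would lie.
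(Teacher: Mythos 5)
The paper itself contains no proof of this theorem: it is quoted from \cite{Azizov,Bognar} as background material, so there is no argument of the authors to compare yours against. Judged on its own merits, your proof is sound in the finite-dimensional Pontryagin setting in which the paper actually uses the result, and the route you take --- transporting everything to the associated Hilbert space via $Je_{n}=\varepsilon_{n}e_{n}$ and the identity $\varepsilon_{n}[x,e_{n}]=(x,e_{n})$, then invoking the standard dichotomy for orthonormal systems --- is the standard one from the cited sources. Your construction of $\K^{\pm}$ as closed spans in the direction $(ii)\Rightarrow(i)$ is also correct, and your closing remark that the infinite-dimensional case needs extra care about convergence of subseries and closedness of the spans is accurate but irrelevant here, since the paper works in finite dimension throughout.

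The one point that must be made explicit is exactly the one you flagged as delicate: as literally printed, the implication $(i)\Rightarrow(ii)$ is \emph{false}. Take $\K=\Complex^{2}$ with $[z,w]=z_{1}\overline{w_{1}}-z_{2}\overline{w_{2}}$ and the one-element $J$-orthonormal system $\{e_{1}\}$, $e_{1}=(1,0)$: condition $(i)$ holds with the obvious fundamental decomposition, yet $(ii)$ fails for $x=(0,1)$. Some completeness hypothesis is indispensable, and your argument quietly supplies it by speaking of ``the $J$-complete, maximal system,'' which is not among the stated hypotheses (the statement says only ``$J$-orthonormal system''). Your patch is the mathematically correct one --- in \cite{Azizov,Bognar} the corresponding theorem either assumes $J$-completeness or phrases $(i)$ so that the $e_{n}$'s \emph{form orthonormal bases} of $\K^{\pm}$ rather than merely \emph{belong} to them --- and your observation that, granted $(i)$, $J$-completeness of the system coincides with its completeness in the associated Hilbert space is the right way to close the argument; note also that in the converse direction no such hypothesis is needed, since $(ii)$ itself forces $J$-completeness (if $[x,e_{n}]=0$ for all $n$, then $(ii)$ gives $x=0$). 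So your proof establishes the corrected statement; to be airtight you should say explicitly that you are adding $J$-completeness as a hypothesis (or strengthening $(i)$ to a spanning condition), rather than leaving it implicit.
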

\begin{example}[ $\Complex^{k}$ like a Pontryagin space]\label{examplecomplexk}
\begin{equation}
  \Complex^{k}=\left\{(z_{1},z_{2},\ldots,z_{k}): z_{n}\in\Complex,\quad n=1,2,\ldots,k\right\}.
\end{equation}
  On it, we define the indefinite inner product
  \begin{equation}\label{prodindecomplex}
    [z,k]_{\Complex^{k}}:=\sum_{n=1}^{k}(-1)^{n-1}z_{n}\overline{w_{n}},\quad z,w\in\Complex^{k}.
  \end{equation}
  Now,  $\Complex^{k}=\Complex_{+}^{k}+\Complex_{-}^{k}$, where
  \begin{align}
\Complex_{+}^{k}&:=\left\{z\in\Complex^{k}:z=\left(z_{1},0,z_{3},\ldots,z_{k}\right),\,\text{if  $k=2n-1$ for some $n\in\N$}\right\}\\
\Complex_{-}^{k}&:=\left\{z\in\Complex^{k}:z=\left(0,z_{2},0,z_{4},\ldots,z_{k}\right),\,\text{if  $k=2n$ for some $n\in\N$}\right\}.
  \end{align}
 Observe that if $z\in\Complex_{+}^{k}$ and $w\in\Complex_{-}^{k}$, then
  \begin{align*}
[z,z]_{\Complex^{k}}&=\sum_{n=1}^{k}(-1)^{n-1}|z_{n}|^{2}=\sum_{\text{$n$ is even}}^{k}|z_{n}|^{2}>0\\
[w,w]_{\Complex^{k}}&=\sum_{n=1}^{k}(-1)^{n-1}|w_{n}|^{2}=-\sum_{\text{$n$ is odd}}^{k}|w_{n}|^{2}<0.
\end{align*}
On the other hand, if $z\in\Complex_{+}^{k}$ and $w\in\Complex_{-}^{k}$, then
  $$[z,w]_{\Complex^{k}}=\sum_{n=1}^{k}(-1)^{n-1}z_{n}\overline{w_{n}}=0.$$
Define the linear operator $\widetilde{J}:\Complex^{k}\To\Complex$ given by
\begin{equation}\label{Jcomplexk}
\widetilde{J}z=(z_{1},-z_{2},z_{3},\ldots,(-1)^{k-1}z_{k}),\quad z=(z_{1},z_{2},\ldots,z_{k})\in\Complex^{k}.
\end{equation}
This operator is self-adjoint, $J$-self-adjoint with $\widetilde{J}^{2}=\mathrm{id}$, and
\begin{equation}
  [\widetilde{J}z,w]_{\Complex^{k}}=\left[\left(z_{1},-z_{2},\ldots,(-1)^{k-1}z_{k}\right),\left(w_{1},w_{2},\ldots,w_{k}\right)\right]=\sum_{n=1}^{k}z_{n}\overline{w_{n}}=\langle\,z,w\rangle_{\Complex^{k}}.
\end{equation}
   Hence, $\mathfrak{C}^{k}=\left(\Complex^{k},[\cdot,\cdot]_{\Complex^{k}}\right)$ is a Pontryagin space with fundamental symmetry $\widetilde{J}$.
\end{example}

Since  $(\mathcal{K},[\cdot,\cdot]_{J})$ is a Hilbert space, we can study linear operators  acting on Krein spaces. The topological concepts as continuity, closedness operators and spectral theory and so on, refer to the topology induced by the $J$-norm given in \eqref{jnorma}. Therefore, we can concluded that some definitions of \emph{operator theory in Hilbert spaces} are satisfied.  The adjoint of an operator $T$ in Krein spaces $\left(T^{[\ast]}\right)$ satisfies $[T(x),y] = [x,T^{[\ast]}(y)]$, but, we must consider that $T$ have an adjoint operator in the Hilbert space  $(\mathcal{K}, [\cdot,\cdot]_{J})$  denoted $\left(T^{\ast J}\right)$, where  $J$  is the fundamental symmetry in $\mathcal{K}$, and there is a relation between $T^{\ast J}$ and $T^{[\ast]}$, which is $T^{[ \ast ] } = J T^{\ast J}J$. Moreover, let $\mathcal{K}$ and $\mathcal{K}'$ be Krein spaces  with fundamental symmetries $J_{\mathcal{K}}$ and $J_{\mathcal{K}'}$ respectively, if $T\in\mathcal{B}(\mathcal{K},\mathcal{K}')$ then $T^{[*]_{\mathcal{K}}}=J_{\mathcal{K}}T^{*J_{\mathcal{K}}}J_{\mathcal{K}'}$. An operator $T\in\mathcal{B}(\mathcal{K})$ is said to be \emph{self-adjoint} if $T=T^{[*]}$, and $J$-\emph{self-adjoint} whether $T=T^{*J}$, moreover, a linear operator $T$ is said to be \emph{positive} whether $[Tx,x]\geq0$ for every $x\in\mathcal{K}$. An operator $T$ is said to be \emph{uniformly positive} if there exists $\alpha>0$ such that
$[Tx,x]\geq\alpha\|x\|_{J}$ for every $k\in\mathcal{K}$. Furthermore, a linear operator $T$ is said to be \emph{invertible} when its range and domain are the whole space.
\begin{proposition}\label{proposicionproductokrein}
 Let $\left(\mathcal{K},[\cdot,\cdot]_{\K}\right),  \left(\mathcal{H},[\cdot,\cdot]_{\h}\right)$ be Krein spaces with fundamental symmetries $J_{\mathcal{K}}$,  $J_{\mathcal{H}}$  respectively. The vector space $\Re=\K\times\h$ with sesquilinear form
 \begin{equation}
 [\cdot,\cdot]_{\Re}:=[\cdot,\cdot]_{\K}+[\cdot,\cdot]_{\h},
 \end{equation}
is a Krein space with fundamental symmetry
\begin{equation}
  J_{\Re}=\left(J_{\K},J_{\h}\right).
\end{equation}
\end{proposition}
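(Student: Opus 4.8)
The plan is to exhibit an explicit fundamental decomposition of $\Re$ assembled from those of the two factors and then to verify, clause by clause, the definition of a Krein space. Writing the given fundamental decompositions as $\K=\K^{+}\oplus\K^{-}$ and $\h=\h^{+}\oplus\h^{-}$, I would set $\Re^{+}=\K^{+}\times\h^{+}$ and $\Re^{-}=\K^{-}\times\h^{-}$. Since every $(k,h)\in\Re$ decomposes uniquely as $(k^{+},h^{+})+(k^{-},h^{-})$ with $k^{\pm}\in\K^{\pm}$ and $h^{\pm}\in\h^{\pm}$, we obtain the vector-space direct sum $\Re=\Re^{+}\oplus\Re^{-}$. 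That $[\cdot,\cdot]_{\Re}$ is sesquilinear is immediate, being the coordinatewise sum of the two sesquilinear forms $[\cdot,\cdot]_{\K}$ and $[\cdot,\cdot]_{\h}$.

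The core of the verification is to show that $(\Re^{+},[\cdot,\cdot]_{\Re})$ and $(\Re^{-},-[\cdot,\cdot]_{\Re})$ are Hilbert spaces and that $\Re^{+}[\perp]\Re^{-}$. For $x=(k^{+},h^{+})\in\Re^{+}$ we have $[x,x]_{\Re}=[k^{+},k^{+}]_{\K}+[h^{+},h^{+}]_{\h}\geq 0$, with equality forcing $k^{+}=0$ and $h^{+}=0$ by the positive-definiteness of $[\cdot,\cdot]_{\K}$ on $\K^{+}$ and of $[\cdot,\cdot]_{\h}$ on $\h^{+}$; thus $[\cdot,\cdot]_{\Re}$ restricts to an inner product on $\Re^{+}$. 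Completeness is inherited because the associated norm on $\Re^{+}$ is the orthogonal-sum norm of the two complete spaces $\K^{+}$ and $\h^{+}$, so $\Re^{+}$ is a Hilbert space; the argument for $(\Re^{-},-[\cdot,\cdot]_{\Re})$ is identical with signs reversed. For orthogonality, if $x\in\Re^{+}$ and $y\in\Re^{-}$ then $[x,y]_{\Re}=[k^{+},k^{-}]_{\K}+[h^{+},h^{-}]_{\h}=0$, each term vanishing because $\K^{+}[\perp]\K^{-}$ in $\K$ and $\h^{+}[\perp]\h^{-}$ in $\h$. This establishes that $\Re$ is a Krein space.

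Finally I would identify the fundamental symmetry. Let $P_{\Re}^{\pm}$ be the projections onto $\Re^{\pm}$ induced by the decomposition above; directly from the definitions one checks $P_{\Re}^{+}(k,h)=(P_{\K}^{+}k,P_{\h}^{+}h)$ and similarly for $P_{\Re}^{-}$, so that $J_{\Re}=P_{\Re}^{+}-P_{\Re}^{-}$ acts as $J_{\Re}(k,h)=(J_{\K}k,J_{\h}h)$, i.e. $J_{\Re}=(J_{\K},J_{\h})$ in the notation of the statement. It then remains only to confirm $J_{\Re}^{2}=\mathrm{id}$ and $[x,y]_{\Re}=(J_{\Re}x,y)_{\Re}$, both of which follow at once from the corresponding identities $J_{\K}^{2}=\mathrm{id}$, $J_{\h}^{2}=\mathrm{id}$ and $[\cdot,\cdot]=(J\cdot,\cdot)$ holding in each factor.

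I do not expect a genuine obstacle here: the content is entirely a coordinatewise bookkeeping argument, and the only points requiring a line of care are the completeness of $\Re^{\pm}$ (where one notes that convergence in the product norm is equivalent to coordinatewise convergence, which holds since the sum is finite) and the vanishing of the mixed terms $[k^{+},k^{-}]_{\K}$ and $[h^{+},h^{-}]_{\h}$, which is precisely the orthogonality built into the two given fundamental decompositions.
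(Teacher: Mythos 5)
Your proposal is correct and follows essentially the same route as the paper's proof: the same decomposition $\Re^{\pm}=\K^{\pm}\times\h^{\pm}$, the same verification that the mixed terms vanish, completeness checked coordinatewise, and the same identification and verification of $J_{\Re}=(J_{\K},J_{\h})$. If anything, your write-up is slightly more explicit than the paper's (you verify positive-definiteness on $\Re^{+}$ and derive $J_{\Re}$ from the projections $P_{\Re}^{\pm}$, where the paper verifies $J_{\Re}^{[*]}=J_{\Re}$ and $J_{\Re}^{2}=\mathrm{id}$ directly), but the substance is identical.
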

\begin{proof}
Since $\K=\K^{+}[\dotplus]\K^{-}$ and $\h=\h^{+}[\dotplus]\h^{-}$, we define
$$\Re^{+}=\K^{+}\times\h^{+},\quad\Re^{-}=\K^{-}\times\h^{-}.$$
Thus, $\Re^{\pm}\subset\Re$, and $\Re=\Re^{+}+\Re^{-}$. Observe that if $(k^{+},h^{+})\in\Re^{+}$, and  $(k^{-},h^{-})\in\Re^{-}$, then
$$[(k^{+},h^{+}),(k^{-},h^{-})]_{\Re}=[k^{+},k^{-}]_{\K}+[h^{+},h^{-}]_{\h}=0.$$
Equivalently, $\Re^{+}[\perp]\Re^{-}$. On the other hand, $\left(\Re^{+},[\cdot,\cdot]_{\Re}\right)$ and $\left(\Re^{-},-[\cdot,\cdot]_{\Re}\right)$ are Hilbert spaces. In fact, if $\{x_{n}\}_{n\in\N}$ is a Cauchy sequence in $\left(\Re^{+},[\cdot,\cdot]_{\Re}\right)$, then $\{x_{n}\}_{n\in\N}$ converges in $\left(\Re^{+},[\cdot,\cdot]_{\Re}\right)$ if and only if $\{P_{\K^{+}}x_{n}\}_{n\in\N}$ converges in $\left(\K^{+},[\cdot,\cdot]_{\K}\right)$ and $\{P_{\h^{+}}x_{n}\}_{n\in\N}$ converges in $\left(\h^{+},[\cdot,\cdot]_{\h}\right)$, where $P_{\K^{+}}$ and $P_{\h^{+}}$ are the orthogonal projection on $\K^{+}\times\{0\}$ and $\{0\}\times\h^{+}$ respectively. Therefore, $\left(\Re,[\cdot,\cdot]_{\Re}\right)$ is a Krein space.
Now, consider $  J_{\Re}=\left(J_{\K},J_{\h}\right):\Re\To\Re$, it satisfies the following properties
\begin{align*}
[J(x,y),(a,b)]_{\Re}&=[(J_{\K}x,J_{\h}y),(a,b)]_{\Re}=[J_{\K}x,a]_{\K}+[J_{\h}y,b]_{\h}\\
&=[x,J_{\K}a]_{\K}+[y,J_{\h}b]_{\h}=[(x,y),J(a,b)]_{\Re}.
\end{align*}
Thus $J^{[*]}=J$. Also, $J^{2}\left(x, y\right) = J\left(J_{\K}x, J_{\h}y\right)=\left(J^{2}_{\mathcal{K}}x,  J^{2}_{\mathcal{H}}y\right)= \left(x, y\right).$ i.e $J^{2}=\mathrm{id}$.
\end{proof}
\begin{remark}
On the product space $\Re$ given above, we can define the scalar product
\begin{equation}
  [(x, y),(a,b)]_{J}:=[J(x, y),(a,b)]_{\Re}=[J_{\K}x,a]_{\K}+[J_{\h}y,b]_{\h}.
  \end{equation}
\end{remark}

\subsection{Frames In Krein Spaces}
This subsection is based in the results about the frame theory in Krein spaces studied in \cite{KEFER}. We used such results in Pontryagin spaces with dimension $N$.
\begin{definition}  \label{fK}
 Let $\mathcal{K}$ be a Krein space. A countable sequence $\left\{x_{n}\right\}_{n\in \fN}\subset \mathcal{K}$
 is called a \emph{frame for} $\mathcal{K}$, if there exist constants $0< A\leq B<\infty$ such that
\begin{equation}
A\hs \|x\|_{J}^{2}
\,\leq\, \sum_{n\in \fN}\left|\left[x_{n},x\right]\right|^{2}
\,\leq\, B\hs \|x\|_{J}^{2}\quad \text{for all} \  \,x\in\mathcal{K}.  \label{m1}
\end{equation}
\end{definition}
\begin{remark}
Since we are mostly interested in finite-dimensional spaces, and
since one can always fill up a finite frame with zero elements,
we assume that $\fN=\{1,\ldots,k\}$.
\end{remark}

As in the Hilbert space case, we refer to $A$ and $B$ as frame bounds.
The greatest constant $A$ and the smallest constant $B$ satisfying \eqref{m1}
are called optimal lower frame bound and optimal upper frame bound, respectively.
A frame is tight, if one can choose $A=B$. If a frame ceases to be a frame
when an arbitrary element is removed, the frame is said to be exact.

The next theorem shows that frames for a Krein space are essentially the same objects as frames
for the associated Hilbert space.
\begin{theorem}(\cite{KEFER})\label{prop1}
Let $\mathcal{K}$ be a finite-dimensional Pontryagin space and $\{x_{n}\}_{n=1}^{k}$ a sequence in $\mathcal{K}$.
The following statements are equivalent:\\[6pt]
\phantom{ii}i)   $\{x_{n}\}_{n=1}^{k}$ is a frame for the Pontryagin space $\mathcal{K}$ with frame bounds $A\leq B$.\\[6pt]
\phantom{i}ii) $\{Jx_{n}\}_{n=1}^{k}$ is a frame for the  Pontryagin space $\mathcal{K}$ with frame bounds $A\leq B$.\\[6pt]
iii) $\{x_{n}\}_{n=1}^{k}$ is a frame for the  Hilbert space $\left(\mathcal{K}, [\cdot,\cdot]_{J}\right)$ with frame bounds $A\leq B$.\\[6pt]
\phantom{}\hs\hs iv) $\{Jx_{n}\}_{n=1}^{k}$ is a frame for the  Hilbert space $\left(\mathcal{K}, [\cdot,\cdot]_{J}\right)$ with frame bounds $A\leq B$.
\end{theorem}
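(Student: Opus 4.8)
The plan is to reduce all four statements to two elementary pointwise identities among the bracket quantities that occur in them, and then to use that the fundamental symmetry $J$ acts as a self-adjoint involution on the associated Hilbert space $\left(\mathcal{K},[\cdot,\cdot]_J\right)$.

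First I would record, for every index $n$ and every $x\in\mathcal{K}$, the two identities $[x_n,x]=(Jx_n,x)=[Jx_n,x]_J$ and $[Jx_n,x]=(x_n,x)=[x_n,x]_J$, which follow immediately from $[u,v]=(Ju,v)$, $[u,v]_J=(u,v)$ and $J^2=\mathrm{id}$. Since the forms are Hermitian, $|[u,v]|=|[v,u]|$, so passing to moduli the first identity shows that the middle sum in $i)$ coincides term by term with the middle sum in $iv)$, and the second identity shows the same for $ii)$ and $iii)$. Consequently the inequalities defining $i)$ and $iv)$ are literally identical, giving $i)\Leftrightarrow iv)$, and likewise $ii)\Leftrightarrow iii)$, with the same bounds $A\le B$ in each case.

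It remains to connect the two pairs. Here I would use that $J=P^{+}-P^{-}$ is self-adjoint on $\left(\mathcal{K},(\cdot,\cdot)\right)$ and satisfies $J^2=\mathrm{id}$, so that $J$ is a unitary involution; in particular $\|Jx\|_J=\|x\|_J$ and $(Jx_n,x)=(x_n,Jx)$ for all $x$. Starting from $iii)$, which reads $A\|x\|_J^{2}\le\sum_{n}|(x_n,x)|^{2}\le B\|x\|_J^{2}$ for every $x\in\mathcal{K}$, I would substitute $x\mapsto Jx$; the norm is unchanged and the sum becomes $\sum_{n}|(x_n,Jx)|^{2}=\sum_{n}|(Jx_n,x)|^{2}$, which is exactly the middle sum of $i)$. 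This gives $iii)\Rightarrow i)$, and applying the same substitution once more (legitimate since $J^2=\mathrm{id}$) yields the converse, so $i)\Leftrightarrow iii)$. Chaining $i)\Leftrightarrow iv)$, $ii)\Leftrightarrow iii)$ and $i)\Leftrightarrow iii)$ proves that the four statements are equivalent.

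The argument is pure bookkeeping with the fundamental symmetry, so I do not expect a genuine obstacle; the only points deserving care are verifying that $x\mapsto Jx$ is isometric for $\|\cdot\|_J$ (which is what lets the frame bounds transfer unchanged, and in fact shows the optimal bounds agree) and noting that the Hermitian symmetry of both forms makes the order of arguments irrelevant once $|\cdot|^{2}$ is taken.
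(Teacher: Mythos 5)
Your proof is correct. Note, however, that the paper itself contains no proof of this statement to compare against: Theorem \ref{prop1} is imported verbatim from \cite{KEFER} and used as a black box, so your argument supplies a proof that the paper omits. The argument is sound at every step: the identities $[x_{n},x]=(Jx_{n},x)=[Jx_{n},x]_{J}$ and $[Jx_{n},x]=(x_{n},x)=[x_{n},x]_{J}$ follow directly from $[u,v]=(Ju,v)$, $[u,v]_{J}=(u,v)$ and $J^{2}=\mathrm{id}$, and they make the frame inequalities in $i)$ and $iv)$, respectively in $ii)$ and $iii)$, literally the same inequality, so those equivalences carry the bounds unchanged. The bridge $i)\Leftrightarrow iii)$ via the substitution $x\mapsto Jx$ is also valid, since $J=P^{+}-P^{-}$ is a self-adjoint involution on $\left(\mathcal{K},(\cdot,\cdot)\right)$, hence a bijective isometry with $(x_{n},Jx)=(Jx_{n},x)$; quantifying over $Jx$ is the same as quantifying over all of $\mathcal{K}$, so the bounds again transfer exactly (and the optimal bounds agree). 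Two further points worth noting: your proof nowhere uses finite-dimensionality, so it establishes the theorem for arbitrary Krein spaces, which is the generality in which \cite{KEFER} states it; and the Hermitian-symmetry remark is needed only if one insists on comparing with the convention $|[x,x_{n}]|$, since the paper's Definition \ref{fK} already places the frame vector in the first slot.
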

\begin{corollary}\label{generadokreinframe}
  Let $(\K,[\cdot,\cdot])$ be a finite-dimensional Pontryagin space with fundamental symmetry $J$, and $\{x_{n}\}_{n=1}^{k}$ be a family of vectors in $\K$. Then the following are equivalent:
  \begin{enumerate}
    \item [$i).$] $\{x_{n}\}_{n=1}^{k}$ is a frame for the Pontryagin space $\K$
    \item [$ii).$] $\{x_{n}\}_{n=1}^{k}$ is a frame for the Hilbert space $(\K,[\cdot,\cdot]_{J})$
    \item [$iii).$] $\operatorname{span}\{x_{n}\}_{n=1}^{k}=\K$.
  \end{enumerate}
\end{corollary}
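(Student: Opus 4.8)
The plan is to reduce everything to the standard characterization of finite frames in Hilbert spaces and to invoke Theorem~\ref{prop1}. The equivalence $i)\Leftrightarrow ii)$ is immediate: Theorem~\ref{prop1} asserts precisely that $\{x_{n}\}_{n=1}^{k}$ is a frame for the Pontryagin space $\K$ (with the indefinite product in the middle sum) if and only if it is a frame for the associated Hilbert space $(\K,[\cdot,\cdot]_{J})$. Hence it suffices to establish $ii)\Leftrightarrow iii)$, both of which live entirely inside the positive-definite Hilbert space $(\K,[\cdot,\cdot]_{J})$.

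For $ii)\Rightarrow iii)$ I would argue by contraposition using the lower frame bound. If $\operatorname{span}\{x_{n}\}_{n=1}^{k}\neq\K$, then its $[\cdot,\cdot]_{J}$-orthogonal complement contains a nonzero vector $x$, so that $[x_{n},x]_{J}=0$ for every $n$. The middle sum $\sum_{n=1}^{k}|[x_{n},x]_{J}|^{2}$ then vanishes while $\|x\|_{J}>0$, contradicting the lower inequality $A\|x\|_{J}^{2}\leq\sum_{n=1}^{k}|[x_{n},x]_{J}|^{2}$ with $A>0$. Therefore the span must be all of $\K$.

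For $iii)\Rightarrow ii)$ I would use the finiteness of the family together with the finite dimensionality of $\K$. The upper bound holds for any finite family: by the Cauchy--Schwarz inequality in $(\K,[\cdot,\cdot]_{J})$ one has $\sum_{n=1}^{k}|[x_{n},x]_{J}|^{2}\leq\big(\sum_{n=1}^{k}\|x_{n}\|_{J}^{2}\big)\|x\|_{J}^{2}$, so one may take $B=\sum_{n=1}^{k}\|x_{n}\|_{J}^{2}$. For the lower bound I would consider the continuous function $f(x)=\sum_{n=1}^{k}|[x_{n},x]_{J}|^{2}$ restricted to the $J$-unit sphere $\{x\in\K:\|x\|_{J}=1\}$, which is compact because $(\K,[\cdot,\cdot]_{J})$ is a finite-dimensional Hilbert space. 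If $f(x)=0$ for some $x$ on the sphere, then $x$ is $[\cdot,\cdot]_{J}$-orthogonal to every $x_{n}$, hence to $\operatorname{span}\{x_{n}\}_{n=1}^{k}=\K$, forcing $x=0$, a contradiction. Thus $f$ is strictly positive on a compact set and attains a positive minimum $A>0$; homogeneity then gives $A\|x\|_{J}^{2}\leq f(x)$ for all $x\in\K$, which is exactly the lower frame bound.

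The argument presents no serious obstacle; the only point requiring care is to keep track of which inner product occupies the middle sum. Conditions $ii)$ and $iii)$ are statements about the positive-definite Hilbert space $(\K,[\cdot,\cdot]_{J})$, where compactness of the unit sphere legitimizes the extremal argument for the lower bound, while the passage to the indefinite frame condition $i)$ is supplied cleanly by Theorem~\ref{prop1}.
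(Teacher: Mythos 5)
Your proof is correct and follows the same route as the paper: both obtain $i)\Leftrightarrow ii)$ directly from Theorem~\ref{prop1} and then reduce the rest to the equivalence $ii)\Leftrightarrow iii)$ inside the Hilbert space $(\K,[\cdot,\cdot]_{J})$. The only difference is that the paper disposes of $ii)\Leftrightarrow iii)$ by citing \cite{Deguang}, whereas you prove it outright (Cauchy--Schwarz for the upper bound, compactness of the unit sphere plus homogeneity for the lower bound), which is exactly the standard argument the reference contains, so your write-up is simply a self-contained version of the paper's proof.
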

\begin{proof}
  The equivalence between $i$ and $ii$ is follows from Theorem \ref{prop1}, and the equivalence between $ii$ and $iii$ can be found in \cite{Deguang}.
\end{proof}

\begin{definition}\label{preframeoperator}\cite{KEFER}
 Let $(\mathcal{K}, [\cdot,\cdot])$ be a  finite-dimensional Pontryagin space  with fundamental symmetry $J$ and let
 $(\mathfrak{C}^{k}, [\cdot,\cdot]_{\Complex^{k}})$ be the Pontryagin space with fundamental symmetry $\tilde J$
  given in \eqref{Jcomplexk}.  Given a frame $\{x_{n}\}_{n=1}^{k}$ for $\mathcal{K}$,
 the linear map
 \[ \label{T}
 T: \mathfrak{C}^{k} \To \mathcal{K},\quad T\hs (\alpha_n)_{n=1}^{k} = \sum_{n=1}^{k} \alpha_n k_n
 \]
 is called \emph{pre-frame operator}.
\end{definition}
\begin{remark}
  The pre-frame operator given in \eqref{T} is defined for more generally Krein spaces, it can be found in \cite{KEFER}. Just remember that $\ell_{2}\left(\left\{1,\ldots,k\right\}\right)$ can be identified with $\Complex^{k}$.
\end{remark}
\begin{proposition}(\cite{KEFER})\label{preframeboundedsurjective}
Let $\left(\K, [\cdot,\cdot]\right)$ be a Krein space. The family $\{x_{n}\}_{n\in\N}$
is a frame for $\mathcal{K}$, if and only if $T$ is well defined (i.e.\ bounded)  and surjective.
\end{proposition}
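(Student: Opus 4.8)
The plan is to convert the two frame inequalities in \eqref{m1} into a single two-sided estimate on the adjoint of the pre-frame operator $T$, and then to extract well-definedness and surjectivity from the upper and lower bounds separately via the standard duality between synthesis and analysis operators. Throughout, all estimates are taken with respect to the $J$-norms of \eqref{jnorma}; on $\mathfrak{C}^{k}$ the $\tilde J$-norm is simply the Euclidean norm, so $T$ is bounded (resp.\ surjective) as a map of Krein spaces exactly when it is bounded (resp.\ surjective) as a map of the associated Hilbert spaces. (For a general index set $\fN=\N$ one replaces $\mathfrak{C}^{k}$ by the analogous $\ell_{2}(\N)$ construction noted after Definition \ref{preframeoperator}; the argument is unchanged.)

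First I would compute the Krein adjoint $T^{[*]}\colon\K\To\mathfrak{C}^{k}$, characterised by $[T\alpha,x]=[\alpha,T^{[*]}x]_{\mathfrak{C}^{k}}$ for all $\alpha\in\mathfrak{C}^{k}$ and $x\in\K$. Expanding the left-hand side gives $[T\alpha,x]=\sum_{n}\alpha_{n}[x_{n},x]$, while \eqref{prodindecomplex} expands the right-hand side as $\sum_{n}(-1)^{n-1}\alpha_{n}\overline{(T^{[*]}x)_{n}}$; matching coefficients yields $(T^{[*]}x)_{n}=(-1)^{n-1}[x,x_{n}]$. Since $\|\cdot\|_{\tilde J}$ is the Euclidean norm on $\mathfrak{C}^{k}$, this produces the key identity
\begin{equation*}
\|T^{[*]}x\|_{\tilde J}^{2}=\sum_{n}\bigl|[x_{n},x]\bigr|^{2},\qquad x\in\K .
\end{equation*}
Hence \eqref{m1} says precisely that $A\|x\|_{J}^{2}\leq\|T^{[*]}x\|_{\tilde J}^{2}\leq B\|x\|_{J}^{2}$ for all $x$; that is, $\{x_{n}\}$ is a frame if and only if $T^{[*]}$ is bounded above by $\sqrt{B}$ and bounded below by $\sqrt{A}$.

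It then remains to transfer these two properties to $T$ itself. Because $T^{[*]}=\tilde J\,T^{*J}J$ (the adjoint relation recalled in the preliminaries), with $\tilde J$ and $J$ isometric bijections of the associated Hilbert spaces, $T^{[*]}$ is bounded (resp.\ bounded below) precisely when $T^{*J}$ is. The upper inequality then reads $\|T^{*J}\|\leq\sqrt{B}$, equivalently $\|T\|\leq\sqrt{B}$, i.e.\ $T$ is well defined and bounded (the Bessel condition). The lower inequality reads $\|T^{*J}x\|\geq\sqrt{A}\,\|x\|_{J}$, i.e.\ $T^{*J}$ is bounded below; by the classical duality for the synthesis and analysis operators (cf.\ \cite{Deguang}) an operator is surjective if and only if its adjoint is bounded below, so this is equivalent to $T$ being surjective. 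Combining the two equivalences gives the proposition.

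The only delicate point is the passage ``adjoint bounded below $\Rightarrow$ $T$ surjective'': in general one must argue that $T^{*J}$ bounded below forces $\Rang T$ to be closed, and that $\ker T^{*J}=(\Rang T)^{\perp}=\{0\}$ makes it dense, whence $\Rang T=\K$. In the finite-dimensional Pontryagin setting this is immediate, and indeed the whole argument can be shortcut: $\Rang T=\operatorname{span}\{x_{n}\}_{n=1}^{k}$, so by Corollary \ref{generadokreinframe} the family is a frame if and only if $\operatorname{span}\{x_{n}\}=\K$, i.e.\ if and only if $T$ is onto, boundedness being automatic in finite dimensions.
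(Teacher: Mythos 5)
Your proof is correct. Note, however, that the paper itself gives no argument for this proposition: it is quoted from \cite{KEFER}, so there is no internal proof to compare against. Your route is the standard synthesis/analysis duality, and it is sound in the paper's setting: the identity $\|T^{[*]}x\|_{\tilde J}^{2}=\sum_{n}|[x_{n},x]|^{2}$ does turn \eqref{m1} into a two-sided bound on the analysis operator; the relation $T^{[*]}=\tilde J\,T^{*J}J$ (the paper's formula for adjoints between two Krein spaces, with $\tilde J$ and $J$ unitary on the associated Hilbert spaces) correctly transfers those bounds to $T^{*J}$; and the equivalence ``$T$ surjective $\Leftrightarrow$ $T^{*J}$ bounded below'' is the classical closed-range fact, which you rightly flag and which is trivial in finite dimensions. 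Two small remarks. First, your computed adjoint $(T^{[*]}x)_{n}=(-1)^{n-1}[x,x_{n}]$ differs from the paper's displayed formula \eqref{T*} by a complex conjugation in each entry (a consequence of the first-argument-linear convention in \eqref{prodindecomplex}); this is immaterial here since only $|[x_{n},x]|^{2}$ enters. Second, your finite-dimensional shortcut via Corollary \ref{generadokreinframe} ($\Rang T=\operatorname{span}\{x_{n}\}_{n=1}^{k}$, frame $\Leftrightarrow$ span equals $\K$ $\Leftrightarrow$ $T$ onto) is not circular within the paper, since that corollary rests only on Theorem \ref{prop1} and \cite{Deguang}; it is arguably the cleanest proof in the regime the paper actually works in, while your main argument has the advantage of covering the general (infinite-dimensional, countable-index) Krein space statement as literally stated.
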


\begin{definition}
Let $\left(\K, [\cdot,\cdot]\right)$ be a  finite-dimensional Pontryagin space . The adjoint of pre-frame operator $T$ is given by
\[  \label{T*}
T^{[*]}x= \tilde{J}([x_n,x])_{n=1}^{k}, \quad k\in\mathcal{K}.
\]
And is so-called \emph{analysis operator}.
\end{definition}
\begin{definition} \label{defS}
Let $(\mathcal{K}, [\cdot,\cdot])$ be a  finite-dimensional Pontryagin space  with fundamental symmetry $J$,
 $(\mathfrak{C}^{k}, [\cdot,\cdot]_{\Complex^{k}})$ a Pontryagin space with fundamental symmetry $\tilde J$
 given in  \ref{examplecomplexk}, and \eqref{Jcomplexk} respectively,
 and $\{k_{n}\}_{n=1}^{k}\subset\mathcal{K}$  a frame for $\mathcal{K}$.
The operator
$$
S:= T\hs\tilde J\hs T^{[*]}
$$
is called \emph{frame operator}.
\end{definition}
It follows immediately from \eqref{T}, \eqref{T*} and $\tilde{J}^2= \mathrm{id}$ that
\[    \label{Sk}
 Sx = \sum_{n\in\N} [x_n\hs,x]\hs x_n,\quad x\in\mathcal{K},
\]
as desired.
Moreover, $S$ is clearly self-adjoint.
If $(\mathfrak{C}^{k}, [\cdot,\cdot])= (\Complex^{k}, \ip{\cdot})$, then $S=TT^*$, exactly as in the
Hilbert space case.

\section{Main Results}\label{resultadosprincipales}

\subsection{Construction of frames with a operator on finite-dimensional Pontryagin space}
\begin{proposition}\label{construccion}
  Let $(\K,[\cdot,\cdot])$ be a $N$-dimensional Pontryagin space with fundamental symmetry $J$, and $S_{0}$ be a $J$-self-adjoint and positive operator with respect to $[\cdot,\cdot]_{J}$. Let $\lambda_{1}\geq\lambda_{2}\geq\ldots\geq\lambda_{N}>0$ be the eigenvalues of $S_{0}$. Fix $k\geq\,N$ and real numbers $a_{1}\geq\,a_{2}\geq\dots\geq\,a_{k}>0$. The following are equivalent:
  \begin{enumerate}
    \item [$i).$] For every $1\leq\,j\leq\,N$
    \begin{equation}
      \sum_{n=1}^{j}a_{n}^{2}\leq\sum_{n=1}^{j}\lambda_{n},\quad\sum_{n=1}^{k}a_{n}^{2}\leq\sum_{n=1}^{N}\lambda_{n}.
    \end{equation}
    \item [$ii).$] There is a frame $\{x_{n}\}_{n=1}^{k}$ for the Hilbert space $(\K,[\cdot,\cdot]_{J})$ with frame operator $S_{0}$ and $\|x_{n}\|_{J}=a_{n}$, for all $n=1,\ldots,k$.
     \item   [$iii).$] There is a frame $\{x_{n}\}_{n=1}^{k}$ for the Pontryagin space $\K$ with frame operator $S_{0}J$ and\linebreak $\|x_{n}\|_{J}=a_{n}$, for all $n=1,\ldots,k$.
     \item [$iv).$] There is a frame $\{Jx_{n}\}_{n=1}^{k}$ for the the Pontryagin space $\K$ with frame operator $JS_{0}$ and $\|x_{n}\|_{J}=a_{n}$, for all $n=1,\ldots,k$.
     \item [$v).$] There is a frame $\{Jx_{n}\}_{n=1}^{k}$ for the Hilbert space $(\K,[\cdot,\cdot]_{J})$ with frame operator $JS_{0}J$ and $\|x_{n}\|_{J}=a_{n}$, for all $n=1,\ldots,k$.
  \end{enumerate}
\end{proposition}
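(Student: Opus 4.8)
The plan is to regard (i)$\,\Leftrightarrow\,$(ii) as the analytic heart of the statement and the chain (ii)$\,\Leftrightarrow\,$(iii)$\,\Leftrightarrow\,$(iv)$\,\Leftrightarrow\,$(v) as a purely mechanical translation between the associated Hilbert space $(\K,[\cdot,\cdot]_J)$ and the Pontryagin space $\K$. First I would dispose of (i)$\,\Leftrightarrow\,$(ii): since $S_{0}$ is a positive self-adjoint operator on the Hilbert space $(\K,[\cdot,\cdot]_J)$ with eigenvalues $\lambda_{1}\ge\cdots\ge\lambda_{N}>0$, the existence of a Hilbert-space frame $\{x_{n}\}_{n=1}^{k}$ whose frame operator is $S_{0}$ and whose vectors have prescribed norms $\|x_{n}\|_{J}=a_{n}$ is governed exactly by the majorization inequalities in (i). This is precisely the finite-dimensional frame-construction theorem of \cite{CasazzaLeon,Deguang}, which I would invoke directly; no new argument is produced at this step.

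For the remaining equivalences I would first record two facts that force all four statements to carry the \emph{same} frame property and the \emph{same} norm data, so that only the frame operators remain to be matched. By Theorem \ref{prop1} (equivalently Corollary \ref{generadokreinframe}) the assertions ``$\{x_{n}\}$ is a frame for $(\K,[\cdot,\cdot]_J)$'', ``$\{x_{n}\}$ is a frame for $\K$'', ``$\{Jx_{n}\}$ is a frame for $\K$'', and ``$\{Jx_{n}\}$ is a frame for $(\K,[\cdot,\cdot]_J)$'' are mutually equivalent, so the frame property transfers for free. Moreover $J$ is unitary on $(\K,[\cdot,\cdot]_J)$, being self-adjoint with $J^{2}=\mathrm{id}$, whence $\|Jx_{n}\|_{J}=\|x_{n}\|_{J}$; thus the normalization $\|x_{n}\|_{J}=a_{n}$ appears identically in all four statements.

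The only real content left is then to identify the four frame operators, and here the key is the identity $[u,v]=(Ju,v)$ coming from \eqref{jnorma}, together with $J=J^{*}$ and $J^{2}=\mathrm{id}$. Writing $S_{0}x=\sum_{n=1}^{k}(x_{n},x)\,x_{n}$ for the Hilbert frame operator and $S_{\K}x=\sum_{n=1}^{k}[x_{n},x]\,x_{n}$ for the Pontryagin one as in \eqref{Sk}, I would compute
\[
S_{\K}x=\sum_{n=1}^{k}(Jx_{n},x)\,x_{n}=\sum_{n=1}^{k}(x_{n},Jx)\,x_{n}=S_{0}(Jx),
\]
so the Pontryagin frame operator of $\{x_{n}\}$ is $S_{0}J$, which is (ii)$\,\Leftrightarrow\,$(iii). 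Applying the same manipulation to $\{Jx_{n}\}$ and using $J^{2}=\mathrm{id}$ gives $\sum_{n=1}^{k}[Jx_{n},x]\,Jx_{n}=J\sum_{n=1}^{k}(x_{n},x)\,x_{n}=JS_{0}x$ and $\sum_{n=1}^{k}(Jx_{n},x)\,Jx_{n}=JS_{0}(Jx)$, which are exactly the operators $JS_{0}$ and $JS_{0}J$ appearing in (iv) and (v). Since the frame property and the norms are already matched and $S_{0}\mapsto S_{0}J$, $S_{0}\mapsto JS_{0}$, $S_{0}\mapsto JS_{0}J$ are bijective (as $J$ is invertible), these operator identities close the loop (ii)$\,\Leftrightarrow\,$(iii)$\,\Leftrightarrow\,$(iv)$\,\Leftrightarrow\,$(v).

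As for where the difficulty lies: the genuinely hard step is the majorization construction (i)$\,\Rightarrow\,$(ii), but this is imported wholesale from \cite{CasazzaLeon,Deguang}, so inside the present paper it is not an obstacle. The one place demanding care is the direction of the twist by $J$ in each frame operator, which hinges on the conjugation convention in $[u,v]=(Ju,v)$ and on using $J=J^{*}$ to pass from $(Jx_{n},x)$ to $(x_{n},Jx)$; getting this right is precisely what separates $S_{0}J$ from $JS_{0}$ and is the only real subtlety in an otherwise routine translation.
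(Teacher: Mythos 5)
Your proposal is correct and follows essentially the same route as the paper: the equivalence (i)$\,\Leftrightarrow\,$(ii) is imported from \cite{CasazzaLeon}, the transfer of the frame property among (ii)--(v) is obtained from Theorem \ref{prop1}, and the three frame operators are identified by the same direct computation with $[u,v]=(Ju,v)$, $J=J^{*}$, $J^{2}=\mathrm{id}$, yielding $S_{0}J$, $JS_{0}$ and $JS_{0}J$ exactly as in the paper's proof. Your additional remarks (norm invariance under $J$ and bijectivity of $S_{0}\mapsto S_{0}J$, etc.) are harmless refinements of the same argument, not a different approach.
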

\begin{proof}
  The equivalence between $i$ and $ii$ is proved in \cite{CasazzaLeon}. By Theorem \ref{prop1} we have the equivalences $ii\leftrightarrow\,iii\leftrightarrow\,iv\leftrightarrow\,v$. Only we need to find the relationship of its respective frame operator with the operator $S_{0}$. The details is in \cite{KEFER}.

  Since $S_{0}$  is the frame operator corresponding to the family $\{x_{n}\}_{n=1}^{k}$, then
\begin{align*}
&  S_{1}x=\sum_{n=1}^{k}[x,x_{n}]x_{n}=\sum_{n=1}^{k}[Jx,x_{n}]_{J}x_{n}=S_{0}Jx,\quad\forall x\in\K.
\intertext{$S_{1}=S_{0}J$ is the frame operator for the frame $\{x_{n}\}_{n=1}^{k}$ in the Pontryagin space $\K$.  }
&  S_{2}x=\sum_{n=1}^{k}[x,Jx_{n}]Jx_{n}=J\left(\sum_{n=1}^{k}[x,x_{n}]_{J}x_{n} \right)=JS_{0}x,\quad\forall x\in\K.
\intertext{$S_{2}=JS_{0}$ is the frame operator for the frame $\{Jx_{n}\}_{n=1}^{k}$ in the Pontryagin space $\K$.  }
&  S_{3}x=\sum_{n=1}^{k}[x,Jx_{n}]_{J}Jx_{n}=J\left(\sum_{n=1}^{k}[Jx,x_{n}]_{J}x_{n}\right)=JS_{0}Jx,\quad\forall x\in\K.
\intertext{$S_{3}=JS_{0}J$ is the frame operator for the frame $\{Jx_{n}\}_{n=1}^{k}$ in the Hilbert space $(\K,[\cdot,\cdot]_{J})$.  }
\end{align*}
\end{proof}

\subsection{Extension of frames on finite-dimensional Pontryagin space}\mbox{}\\
\label{extension}
This subsection is based on the study in Hilbert spaces , more details see \cite{Deguang}.
\subsubsection*{Similar Frames in  finite-dimensional Pontryagin  Spaces}
\begin{definition}
   Let $(\h,[\cdot,\cdot]_{\h})$ and $(\K,[\cdot,\cdot]_{\K})$ be  finite-dimensional  Pontryagin spaces.  Two frames $\{x_{n}\}_{n=1}^{k}$ and $\{y_{n}\}_{n=1}^{k}$ for $\h$ and $\K$ respectively, are said to be \emph{similar} if there exists an invertible operator $U:\h\rightarrow\K$ such that $Uy_{n}=x_{n}$  for $n\in\{1,\ldots,k\}$. The frames are called \emph{unitarily equivalent} if we require  $U$ to be a unitary operator from $\h$ onto $\K$.
\end{definition}
\begin{proposition}\label{similarframesth}
 Let $(\h,[\cdot,\cdot]_{\h})$, $(\K,[\cdot,\cdot]_{\K})$ be  a finite-dimensional Pontryagin spaces, and   $\{x_{n}\}_{n=1}^{k}$, $\{y_{n}\}_{n=1}^{k}$ be two frames for $\h$ and $\K$ respectively. Then they are similar  if and only if their analysis operators have the same range.
\end{proposition}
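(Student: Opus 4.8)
The plan is to work directly with the Krein-space analysis operators, transcribing the scheme of the classical Hilbert-space result in \cite{Deguang}. Write $T_{x}^{[*]}:\h\To\Complex^{k}$ and $T_{y}^{[*]}:\K\To\Complex^{k}$ for the analysis operators of $\{x_{n}\}_{n=1}^{k}$ and $\{y_{n}\}_{n=1}^{k}$ respectively, in the sense of \eqref{T*}. Two preliminary facts will be used throughout. First, each analysis operator is injective: the lower frame bound in \eqref{m1} gives $A\hs\|w\|_{J}^{2}\leq\sum_{n=1}^{k}|[x_{n},w]|^{2}=\|T_{x}^{[*]}w\|^{2}$ in the standard norm of $\Complex^{k}$ (the involution $\widetilde{J}$ preserves moduli componentwise), so $T_{x}^{[*]}$ is bounded below, and likewise $T_{y}^{[*]}$; hence both are linear bijections onto their ranges. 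Second, if $U$ is invertible then so is its Krein adjoint $U^{[*]}$, since the relation $T^{[*]}=J\hs T^{*J}J$ recalled in Section \ref{preliminares} exhibits $U^{[*]}$ as a product of invertible operators (the fundamental symmetries and the Hilbert-space adjoint $U^{*J}$).

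For the forward implication I would assume the frames are similar, so that there is an invertible $U$ with $Uy_{n}=x_{n}$ (which forces $U:\K\To\h$). Using the defining property of the Krein adjoint, one checks for $w\in\h$ that
\begin{align*}
T_{x}^{[*]}w&=\widetilde{J}\bigl([x_{n},w]_{\h}\bigr)_{n=1}^{k}=\widetilde{J}\bigl([Uy_{n},w]_{\h}\bigr)_{n=1}^{k}\\
&=\widetilde{J}\bigl([y_{n},U^{[*]}w]_{\K}\bigr)_{n=1}^{k}=T_{y}^{[*]}\bigl(U^{[*]}w\bigr),
\end{align*}
so $T_{x}^{[*]}=T_{y}^{[*]}U^{[*]}$. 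Since $U^{[*]}:\h\To\K$ is invertible, $\Rang T_{x}^{[*]}=T_{y}^{[*]}\bigl(\Rang U^{[*]}\bigr)=T_{y}^{[*]}(\K)=\Rang T_{y}^{[*]}$, which is the asserted equality of ranges.

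For the reverse implication I would assume $\Rang T_{x}^{[*]}=\Rang T_{y}^{[*]}=:R\subseteq\Complex^{k}$. Because both analysis operators are bijections onto $R$, the composition $V:=(T_{y}^{[*]})^{-1}T_{x}^{[*]}:\h\To\K$ is a well-defined invertible operator with $T_{y}^{[*]}V=T_{x}^{[*]}$. Cancelling the involution $\widetilde{J}$ and reading this equality componentwise gives $[y_{n},Vw]_{\K}=[x_{n},w]_{\h}$ for every $n$ and every $w\in\h$; moving $V$ across the Krein adjoint turns this into $[V^{[*]}y_{n},w]_{\h}=[x_{n},w]_{\h}$ for all $w$, and non-degeneracy of $[\cdot,\cdot]_{\h}$ then forces $V^{[*]}y_{n}=x_{n}$. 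Setting $U:=V^{[*]}$, which is invertible by the second preliminary fact, exhibits the similarity $Uy_{n}=x_{n}$.

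The delicate part is the reverse direction: one must manufacture an intertwining operator between two \emph{different} Pontryagin spaces from the single hypothesis that two subspaces of $\Complex^{k}$ coincide. This rests on exactly the two ingredients flagged at the outset, namely the injectivity of the analysis operators (so that $(T_{y}^{[*]})^{-1}$ is a genuine linear map on $R$, automatically forcing $\dim\h=\dim R=\dim\K$) and the non-degeneracy of the indefinite inner product (so that the operator identity $T_{y}^{[*]}V=T_{x}^{[*]}$ can be upgraded to the vectorwise identity $V^{[*]}y_{n}=x_{n}$); these replace the orthogonal-complement arguments used in the definite case. The secondary point requiring care is bookkeeping: all adjoints must be taken in the Krein sense and the cancellation of $\widetilde{J}$ on $\Complex^{k}$ must be clean, so that the range computation stays intrinsic and does not secretly depend on a choice of fundamental symmetry.
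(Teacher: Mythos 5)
Your proof is correct, and it splits naturally into a part that matches the paper and a part that genuinely differs. The forward implication is essentially the paper's argument: both you and the paper factor one analysis operator through the other via the adjoint of the similarity ($T_{x}^{[*]}=T_{y}^{[*]}U^{[*]}$ in your notation, $T_{\K}^{[*]}=T_{\h}^{[*]}\circ U^{*}$ in the paper's) and read off equality of ranges; you also handle, sensibly, the typo in the paper's definition of similarity by letting the relation $Uy_{n}=x_{n}$ dictate the direction $U:\K\To\h$. The converse is where you diverge. The paper works on the synthesis side: it forms the operator $T_{\K}\left(T_{\h}^{[*]}T_{\h}\right)^{-1}T_{\h}^{[*]}:\h\To\K$, takes a $J$-orthogonal projection $P$ of $\mathfrak{C}^{k}$ onto the common range $V$, shows both pre-frame operators annihilate $V^{[\perp]}$, and verifies the intertwining on the standard basis vectors via $x_{m}=T_{\h}Pv_{m}$ and $y_{m}=T_{\K}Pv_{m}$. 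You stay entirely on the analysis side: the lower frame bound makes both analysis operators injective, hence bijections onto the common range $R$, so $(T_{y}^{[*]})^{-1}T_{x}^{[*]}:\h\To\K$ is well defined and invertible, and the vectorwise identity $V^{[*]}y_{n}=x_{n}$ falls out of non-degeneracy after one pass through the Krein adjoint. Your route buys real robustness, not just economy: the paper's argument presupposes that a $J$-orthogonal projection of $\mathfrak{C}^{k}$ onto $V=\Rang T_{\h}^{[*]}$ exists, which in an indefinite space requires $V$ to be projectively complete (non-degenerate), a point the paper never verifies and which can actually fail. For instance, for the frame $\{(1,0),(1,0),(0,1)\}$ of $\mathfrak{C}^{2}$ the analysis range in $\mathfrak{C}^{3}$ is $\operatorname{span}\{(1,-1,0),(0,0,1)\}$, which contains the neutral vector $(1,-1,0)$ orthogonal to the whole range, so $R+R^{[\perp]}\neq\mathfrak{C}^{3}$ and no such projection exists. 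Your construction never projects onto $R$ and is immune to this defect. What the paper's approach buys in exchange is an explicit pseudo-inverse--type formula for the intertwiner in terms of the synthesis operators, convenient for computation, whereas your intertwiner is defined through an inverse taken on the range and is logically cleaner but less explicit.
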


\begin{proof}
Let $T_{\h}^{[*]_{\h}}$ and $T_{\K}^{[*]_{\K}}$ be the analysis operator for $\{x_{n}\}_{n=1}^{k}$ y $\{y_{n}\}_{n=1}^{k}$ respectively.

  $\Rightarrow]$ Suppose that $\{x_{n}\}_{n=1}^{k}$ y $\{y_{n}\}_{n=1}^{k}$ are similar, then there is an invertible operator $U:\h_{N}\rightarrow\K_{M}$ such that $Ux_{n}=y_{n}$. Hence
  \begin{align*}
    T_{\K}^{[*]_{\K}}y&=\sum_{j=1}^{k}(-1)^{j-1}[y,y_{j}]e_{j}=\sum_{j=1}^{k}(-1)^{j-1}[y,Ux_{j}]e_{j}=\sum_{j=1}^{k}(-1)^{j-1}[U^{*}y,x_{j}]e_{j}\\
    &=T_{\h}^{[*]_{\h}}\left(U^{*}y\right).
  \end{align*}
  Now, since $U^{*}$ is invertible if and only if $U$ is, we concluded that $$T_{\K}^{[*]_{\K}}(\K)=T_{\h}^{[*]_{\h}}U^{*}(\K)=T_{\h}^{[*]_{\h}}(\h).$$

  $[\Leftarrow$ Suposse that $T_{\K}^{[*]_{\K}}(\K)=T_{\h}^{[*]_{\h}}(\h)=V$. We note that $\left.T_{\h}\right|_{V}$ y $\left.T_{\K}\right|_{V}$ are invertible, thus the operator $G=T_{\K}\left(T_{\h}^{[*]_{\K}}T_{\h}\right)^{-1}T_{\h}^{[*]_{\h}}:\h\rightarrow\K$ is well defined and it is invertible. Let $P$ be a $J$-ortogonal projection of $\Complex^{k}$ on $V$. If $y\in\,V^{[\perp]_{\Complex^{k}}}$, then $0=[x,y]_{\Complex^{k}}$ for all $x\in\,V$. Thus
$$0=[x,y]_{\Complex^{k}}=[T_{\K}^{[*]_{\K}}z,y]_{\Complex^{k}}=[z,T_{\K}y]_{\Complex^{k}},$$
Since $y$ is arbitrary, we concluded that $T_{\K}\left(V^{[\perp]_{\Complex^{k}}}\right)=\{0\}$. Similary we have that $T_{\h}\left(V^{[\perp]_{\Complex^{k}}}\right)=\{0\}$. Thus, by the definition of pre-frame operator we consider $v_{m,j}=\delta_{m,j}\in\Complex^{k}$, and we get $$T_{\h}Pv_{m}=T_{\h}v_{m}=\sum_{j=1}^{k}\delta_{m,j}x_{m}=x_{m},$$
on the other hand, it also satisfies that $y_{m}=T_{\K}v_{m}=T_{\K}Pv_{m}$. Hence,
\begin{align}
  Ux_{m}=UT_{\h}Pv_{m}=T_{\K}\left(T_{\h}^{[*]_{\h}}T_{\h}\right)^{-1}T_{\h}^{[*]_{\h}}T_{\h}Pv_{m}=T_{\K}Pv_{m}=y_{m},
\end{align}
This implies that the frames are similar.
\end{proof}

\begin{theorem}\label{levantamiento2}
  Let $(\h,[\cdot,\cdot]_{\h})$ be a Pontryagin space, and  suppose that $\{x_{n}\}_{n=1}^{k}$ is frame for $\h$. Then there exists a Pontryagin space $(\K,[\cdot,\cdot]_{\K})$ with $\h\subset\K$, and a frame $\{v_{n}\}_{n=1}^{k}$ for $\K$ such that $x_{n}=Pv_{n}$, where  $P$ is the $J$-orthogonal projection from $\K$ onto $\h$.
\end{theorem}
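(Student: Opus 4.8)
The plan is to realize the larger space as a product of $\h$ with an auxiliary Krein space, so that both the inclusion $\h\subset\K$ and the $J$-orthogonal projection $P$ are produced for free by Proposition \ref{proposicionproductokrein}, and the entire problem collapses to a spanning condition, which by Corollary \ref{generadokreinframe} is the same as the frame condition. This sidesteps every difficulty coming from the indefinite metric.

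First I would do the dimension bookkeeping. Put $N=\dim\h$. Since $\{x_{n}\}_{n=1}^{k}$ is a frame for $\h$, Corollary \ref{generadokreinframe} gives $\operatorname{span}\{x_{n}\}_{n=1}^{k}=\h$, so $k\geq N$; set $M=k-N$ (if $M=0$ the statement is trivial with $\K=\h$ and $v_{n}=x_{n}$, so assume $M\geq1$). Choose any Krein space $(W,[\cdot,\cdot]_{W})$ with $\dim W=M$ and fundamental symmetry $J_{W}$, and define $\K:=\h\times W$ with $[\cdot,\cdot]_{\K}:=[\cdot,\cdot]_{\h}+[\cdot,\cdot]_{W}$. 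By Proposition \ref{proposicionproductokrein} this is a Krein space with fundamental symmetry $(J_{\h},J_{W})$; since $\h$ is a Pontryagin space one has $\dim\K^{+}=\dim\h^{+}+\dim W^{+}<\infty$, $\dim\K^{-}=\dim\h^{-}+\dim W^{-}<\infty$, and $\min\{\dim\K^{+},\dim\K^{-}\}>0$, so $\K$ is a Pontryagin space. Identifying $\h$ with $\h\times\{0\}$ yields $\h\subset\K$, and because $[(a,b),(h,0)]_{\K}=[a,h]_{\h}$ vanishes for all $h\in\h$ exactly when $a=0$, the subspace $(\h\times\{0\})^{[\perp]}$ equals $\{0\}\times W$; hence $\h\times\{0\}$ is projectively complete and the idempotent $P(h,w)=(h,0)$, whose kernel is this $J$-orthogonal complement, is precisely the $J$-orthogonal projection of $\K$ onto $\h$.

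Then I would build the lift. Any family $v_{n}=(x_{n},w_{n})$ with $w_{n}\in W$ automatically satisfies $Pv_{n}=x_{n}$, so it only remains to choose $w_{1},\dots,w_{k}\in W$ making $\{v_{n}\}_{n=1}^{k}$ a frame for $\K$; by Corollary \ref{generadokreinframe} this means $\operatorname{span}\{v_{n}\}_{n=1}^{k}=\K$, and since $\dim\K=N+M=k$ it suffices to make the $v_{n}$ linearly independent. Fix a basis $\{f_{i}\}_{i=1}^{N}$ of $\h$ and a basis $\{g_{j}\}_{j=1}^{M}$ of $W$, and write $x_{n}=\sum_{i}X_{in}f_{i}$, so that the $N\times k$ matrix $X$ has rank $N$ because $\{x_{n}\}$ spans $\h$. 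Choose an $M\times k$ matrix $D$ whose rows form a basis of a linear complement of the row space of $X$ in $\Complex^{k}$, and set $w_{n}=\sum_{j}D_{jn}g_{j}$. Then the $k\times k$ block matrix with top block $X$ and bottom block $D$ has rank $N+M=k$, hence is invertible; this says exactly that the coordinate columns of the $v_{n}$ in the basis $\{f_{i}\}\cup\{g_{j}\}$ of $\K$ are linearly independent, so $\{v_{n}\}_{n=1}^{k}$ is a basis of $\K$, therefore spans $\K$, and therefore is a frame for $\K$ by Corollary \ref{generadokreinframe}.

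The hard part is conceptual rather than computational: one must avoid the more obvious Naimark-type route, in which $\K$ is assembled from the range $V=\operatorname{Range}(T^{[*]})\subset\mathfrak{C}^{k}$ of the analysis operator together with the standard $J$-orthonormal basis of $\mathfrak{C}^{k}$. There the genuine obstacle is that $T^{[*]}$ need not preserve the indefinite form, so $V$ with the inherited product is generally \emph{not} a $J$-isometric copy of $\h$; repairing this forces one to pass first to the canonical tight frame via Proposition \ref{construccion}, or to invoke Proposition \ref{similarframesth} to correct the metric. The product-space construction above avoids this entirely, because the equivalence ``frame $\Leftrightarrow$ spanning'' of Corollary \ref{generadokreinframe} turns the whole argument into elementary linear algebra in which the signs of the metric play no role; this is the version I would present.
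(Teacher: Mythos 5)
Your proof is correct, and it takes a genuinely different route from the paper's. The paper runs a Naimark--Han--Larson type dilation through the analysis operator: it takes $V=\left(\operatorname{Rang}T^{[*]}\right)^{[\perp]}\subset\mathfrak{C}^{k}$, forms $\K=\h\times V$ via Proposition \ref{proposicionproductokrein}, lifts $x_{n}$ to $v_{n}=x_{n}\oplus(\mathrm{I}-Q)e_{n}$ with $Q$ the $J$-orthogonal projection onto $\operatorname{Rang}T^{[*]}$, and obtains the frame property from Proposition \ref{similarframesth}: the frames $\{Qe_{n}\}_{n=1}^{k}$ and $\{x_{n}\}_{n=1}^{k}$ have analysis operators with the same range, hence are similar via an invertible $W$, so $v_{n}=(W\oplus\mathrm{I})e_{n}$ is the image of the $J$-orthonormal basis $\{e_{n}\}_{n=1}^{k}$ under an invertible operator. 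You replace all of this by dimension counting in an arbitrary product $\h\times W$ with $\dim W=k-\dim\h$, together with the equivalence frame $\Leftrightarrow$ spanning of Corollary \ref{generadokreinframe}; your verification that $P(h,w)=(h,0)$ is the $J$-orthogonal projection (its kernel $\{0\}\times W$ equals $(\h\times\{0\})^{[\perp]}$ by nondegeneracy of $\h$) and your rank argument for linear independence are both airtight. What the paper's route buys: a canonical construction, expressed entirely in terms of the analysis operator, which is the form of the argument that survives in infinite dimensions, where your rank/dimension count is unavailable; it also exhibits an explicit complementary family $\{(\mathrm{I}-Q)e_{n}\}$. What your route buys, besides brevity: it is genuinely more robust, and your closing suspicion about the Naimark route is well founded --- in fact understated. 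The real obstruction is not that $V$ fails to be a $J$-isometric copy of $\h$, but that $\operatorname{Rang}T^{[*]}$ can be a \emph{degenerate} subspace of $\mathfrak{C}^{k}$, in which case the projection $Q$ does not exist and $V$ carries no Pontryagin structure, so the dilation cannot even be set up without further repair. Concretely, for the frame $\{e_{1},e_{1},e_{2}\}$ of $\mathfrak{C}^{2}$ one computes $\operatorname{Rang}T^{[*]}=\left\{(c,-c,d):c,d\in\Complex\right\}\subset\mathfrak{C}^{3}$, on which the indefinite product of $u=(c,-c,d)$ and $v=(c',-c',d')$ reduces to $d\overline{d'}$; thus the neutral vector $(1,-1,0)$ lies in the range and is $J$-orthogonal to all of it, and $V=\left(\operatorname{Rang}T^{[*]}\right)^{[\perp]}=\left\{(c,-c,0):c\in\Complex\right\}$ is a neutral line rather than a Pontryagin space. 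Your product construction never touches the analysis operator and is immune to this defect, at the price of being non-canonical (it depends on the choices of $W$, of the bases, and of the complement $D$).
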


\begin{proof}
 Consider  $V=\left(\operatorname{Rang}\,T^{[*]}\right)^{[\perp]}\subset\mathfrak{C}^{k}$, which is a Pontryagin space. Hence by Proposition \ref{proposicionproductokrein} we have that $\K=\h_{N}\times\,V=\K\times\{0\}\,\oplus\,\{0\}\times\,V\simeq \K\,\oplus\,V$  is a Pontryagin space with indefinite inner product $[\cdot,\cdot]_{\K}=[\cdot,\cdot]_{\h}+[\cdot,\cdot]_{\Complex^{k}}$, and fundamental symmetry
 $J_{\K}=J_{\h_{N}}\oplus\,\widetilde{J},$ where $\widetilde{J}$ is the fundamental simmetry of Pontryagin space $\mathfrak{C}^{k}$ given to \eqref{Jcomplexk}.  Consider $Q^{[\perp]}=\mathrm{I}-Q$, where $Q:\mathfrak{C}^{k}\rightarrow\operatorname{Rang}T^{[*]}$ is a $J$-orthogonal projection  from $\mathfrak{C}^{k}$ onto $\operatorname{Rang}T^{[*]}$. The family $\{e_{n}\}_{n=1}^{k}$, where $e_{n}$ is the vector which has a $1$ in the $n$-esimo place and zero in the other, it is a base $J$-orthonormal from $\mathfrak{C}^{k}$, and by \eqref{prodindecomplex} it satisfies that $[e_{n},e_{n}]_{\Complex^{k}}=(-1)^{n-1}$.

 We defined
  \begin{equation}
  u_{n}=x_{n}\oplus\,Q^{[\perp]}e_{n},\quad\,n=1,2,\ldots,k.
  \end{equation}
  Note that $P:\K_{M}\rightarrow\h_{N}$ is a  $J$-orthogonal projection from $\K$ into $\h$, we will have that $Pu_{n}=x_{n}$. It is enough to show that the family $\{u_{n}\}_{n=1}^{k}$ is a frame for $\K_{M}$. Indeed, note that by corollary \ref{generadokreinframe} the family $\{e_{n}\}_{n=1}^{k}$  is a frame for $\mathfrak{C}^{k}$,  therefore $\{Qe_{n}\}_{n=1}^{k}$ is a frame for $\operatorname{Rang}T^{[*]}$, (See \cite{KEFER}).  Given $x\in\operatorname{Rang}T^{[*]}$, by \eqref{prodindecomplex} and \eqref{teoremabasekrein} it is obtained that
  \begin{align*}
T_{Q}^{[*]_{\Complex^{k}}}(x)&=\sum_{n=1}^{k}(-1)^{n-1}[x,Qe_{n}]_{\Complex^{k}}e_{n}=\sum_{n=1}^{k}(-1)^{n-1}[Qx,e_{n}]_{\Complex^{k}}e_{n}=\sum_{n=1}^{k}(-1)^{n-1}[x,e_{n}]_{\Complex^{k}}e_{n}\\
&=\sum_{n=1}^{k}[e_{n},e_{n}][x,e_{n}]_{\Complex^{k}}e_{n}=x,
\end{align*}

Where $T_{Q}$ is the pre-frame operator of $\{Qe_{n}\}_{n=1}^{k}$. That is, $\operatorname{Rang}T_{Q}^{[*]_{\Complex^{k}}}=\operatorname{Rang}T^{[*]}$, by theorem \ref{similarframesth}, the frames $\{x_{n}\}_{n=1}^{k}$ y $\{Qe_{n}\}_{n=1}^{k}$ are similars. Let $W$ be the invertible operator such that $WQe_{n}=x_{n}$ for $n=1,\ldots,k$. Therefore, for each $n$ we have that
$$u_{n}=WQe_{n}\oplus\,Q^{[\perp]}e_{n}=U\left(Qe_{n}\oplus\,Q^{[\perp]}e_{n}\right)=Ue_{n},$$
where $U=W\oplus\mathrm{I}$ is an invertible operator from $\mathfrak{C}^{k}$ into $\K_{M}$. Furthermore, since $\{e_{n}\}_{n=1}^{k}$  is a frame for $\mathfrak{C}^{k}$ and the invertible operators preserve frames (ver \cite{KEFER}), thus it concludes that $\{u_{n}\}_{n=1}^{k}$ is a frame for $\K_{M}$.
\end{proof}

\subsection{Coupling of frames on finite-dimensonal Pontryagin space}
\label{acoplamiento}

\begin{theorem}
  Let $\left(\K,[\cdot,\cdot]_{\K}\right)$, and $\left(\h,[\cdot,\cdot]_{\h}\right)$ be Pontryagin spaces. If $\{x_{n}\}_{n=1}^{k}$ and $\{y_{n}\}_{n=1}^{k}$ are frames for $\K$ and $\h$ respectively, then there is a Pontryagin space $\Re$ containing $\K,$ and $\h$, a frame $\{z_{n}\}_{n=1}^{k}$ for $\Re$ such that $P_{\K}z_{n}=x_{n}$ and $P_{\h}z_{n}=y_{n}$, for each $n=1,\ldots,k$, where $P_{\K}$ and $P_{\h}$ are $J$-orthogonal projection from $\Re$ onto $\K$ and $\h$ respectively.
\end{theorem}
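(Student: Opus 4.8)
The plan is to realize $\Re$ as the product Pontryagin space $\K\times\h$ furnished by Proposition \ref{proposicionproductokrein} and to take the coupled family to be the componentwise pairing $z_n=(x_n,y_n)$. First I would invoke Proposition \ref{proposicionproductokrein} to equip $\Re=\K\times\h$ with the form $[\cdot,\cdot]_{\Re}=[\cdot,\cdot]_{\K}+[\cdot,\cdot]_{\h}$ and fundamental symmetry $J_{\Re}=(J_{\K},J_{\h})$, so that $\Re$ is a finite-dimensional Pontryagin space. I would then identify $\K$ with the coordinate subspace $\K\times\{0\}$ and $\h$ with $\{0\}\times\h$; both are closed, hence projectively complete by the proposition of Section \ref{preliminares}. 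Using the nondegeneracy of $[\cdot,\cdot]_{\K}$ and $[\cdot,\cdot]_{\h}$ one computes $(\K\times\{0\})^{[\perp]}=\{0\}\times\h$ and, symmetrically, $(\{0\}\times\h)^{[\perp]}=\K\times\{0\}$, so the $J$-orthogonal projections are exactly the coordinate maps $P_{\K}(a,b)=(a,0)$ and $P_{\h}(a,b)=(0,b)$.

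With this setup the coupling identities are immediate: for $z_n=(x_n,y_n)$ one has $P_{\K}z_n=(x_n,0)\cong x_n$ and $P_{\h}z_n=(0,y_n)\cong y_n$ for every $n$. It remains to prove that $\{z_n\}_{n=1}^{k}$ is a frame for $\Re$. By Corollary \ref{generadokreinframe} this is equivalent to $\operatorname{span}\{z_n\}_{n=1}^{k}=\Re$, which in turn says that the pre-frame operator of $\{z_n\}$, namely $T_{\Re}\colon\mathfrak{C}^{k}\to\Re$, $T_{\Re}(\gamma)=(T_{\K}\gamma,T_{\h}\gamma)$, is surjective, where $T_{\K},T_{\h}$ are the pre-frame operators of $\{x_n\}$ and $\{y_n\}$ from Definition \ref{preframeoperator}. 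Since $T_{\K}$ and $T_{\h}$ are each surjective by Proposition \ref{preframeboundedsurjective}, the surjectivity of $T_{\Re}$ reduces to the single statement that $T_{\h}$ maps $\ker T_{\K}$ onto $\h$: given $(a,b)$, one chooses $\gamma_0$ with $T_{\K}\gamma_0=a$ and then corrects by some $\delta\in\ker T_{\K}$ with $T_{\h}\delta=b-T_{\h}\gamma_0$, whence $\gamma_0+\delta$ is a preimage of $(a,b)$.

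The step I expect to be the main obstacle is precisely this last surjectivity $T_{\h}(\ker T_{\K})=\h$. Because $\dim\ker T_{\K}=k-\dim\K$, it can hold only when $k\ge\dim\K+\dim\h$ and, even then, only when the two frames are in sufficiently general position (equivalently, when the analysis ranges $\operatorname{Rang}T_{\K}^{[*]}$ and $\operatorname{Rang}T_{\h}^{[*]}$ inside $\mathfrak{C}^{k}$ are suitably transversal). I would therefore concentrate the argument here: first disposing of the transversal case by the dimension count above, and then, to remove the genericity hypothesis, replacing the two coordinate embeddings of $\K$ and $\h$ by skew embeddings into a common product so that the copies of $\K$ and $\h$ share the overlap forced by $\dim\Re\le k$. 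Concretely, this is where I would bring in the similar-frames machinery of Proposition \ref{similarframesth} together with the dilation of Theorem \ref{levantamiento2}, representing $\{x_n\}$ and $\{y_n\}$ through their canonical frames $\{Q_{\K}e_n\}$ and $\{Q_{\h}e_n\}$ inside $\mathfrak{C}^{k}$ and matching the $J$-orthogonal projections $Q_{\K},Q_{\h}$ onto $\operatorname{Rang}T_{\K}^{[*]}$ and $\operatorname{Rang}T_{\h}^{[*]}$ to the required $P_{\K},P_{\h}$. Reconciling these identifications with genuine isometric containment $\K,\h\subset\Re$, while keeping the coupled family to exactly $k$ vectors, is the delicate point on which the whole construction turns.
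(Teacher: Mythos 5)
Your proposal is not a proof, and the obstacle you flag is genuine: for $\Re=\K\times\h$ and $z_{n}=(x_{n},y_{n})$ the coupling identities are trivial, but the frame property amounts, via Corollary \ref{generadokreinframe}, to surjectivity of $T_{\Re}\gamma=(T_{\K}\gamma,T_{\h}\gamma)$, and this fails in general: take $\h=\K$ and $y_{n}=x_{n}$, so that the $z_{n}$ span only the diagonal, of dimension $\dim\K<\dim(\K\times\K)$. Your reduction to $T_{\h}(\ker T_{\K})=\h$ and the count $k\geq\dim\K+\dim\h$ plus transversality are correct, but the closing paragraph (skew embeddings, Proposition \ref{similarframesth}, matching $Q_{\K},Q_{\h}$ to $P_{\K},P_{\h}$) is a list of intentions rather than an argument; nothing in it produces $\Re$ and $\{z_{n}\}_{n=1}^{k}$ for an arbitrary pair of frames, and you yourself call the decisive reconciliation ``the delicate point.'' So the attempt is incomplete at exactly the load-bearing step.

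You should know, however, that the paper's own proof founders on precisely the step you isolated. The paper first dilates each frame by Theorem \ref{levantamiento2}, obtaining frames $\{u_{n}\}_{n=1}^{k}$, $\{v_{n}\}_{n=1}^{k}$ for spaces $\Re_{\K}\supset\K$, $\Re_{\h}\supset\h$; in that construction $u_{n}=Ue_{n}$ and $v_{n}=U'e_{n}$ with $U,U'$ invertible, so $\dim\Re_{\K}=\dim\Re_{\h}=k$. It then sets $\Re=\Re_{\K}\times\Re_{\h}$ and $z_{n}=(u_{n},v_{n})$, proves boundedness of $T_{\Re}$, and asserts without any argument that $T_{\Re}$ is surjective. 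That assertion is false: $\Rang T_{\Re}=\left\{(U\gamma,U'\gamma):\gamma\in\mathfrak{C}^{k}\right\}$ is a $k$-dimensional graph inside the $2k$-dimensional space $\Re$, so $\{z_{n}\}_{n=1}^{k}$ spans a proper subspace and is never a frame for $\Re$. Indeed the statement itself, with exactly $k$ vectors, cannot hold unconditionally: a $k$-element frame forces $\dim\Re\leq k$, and for $\h=\K$, $y_{n}=2x_{n}$, $k=\dim\K$, the containments force $\Re=\K=\h$ and $P_{\K}=P_{\h}=\mathrm{id}$, making $x_{n}=z_{n}$ the image of $y_{n}=2x_{n}$ under an isometric embedding, which is impossible. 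A correct conclusion requires either enlarging the frame, e.g.\ the $2k$-element family $\{(u_{n},0)\}_{n=1}^{k}\cup\{(0,v_{n})\}_{n=1}^{k}$ (consistent with the $N$-element frame promised in the abstract), or restricting the pair of frames by a disjointness/transversality hypothesis of the kind you identified. In short, your blind attempt and the published proof share the same missing step; neither closes it, but your diagnosis of where and why it fails is accurate.
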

\begin{proof}
  By Theorem \ref{levantamiento2} there are Pontryagin spaces $\Re_{\K}$ and $\Re_{\h}$, such that $\K\subset\Re_{\K}$, $\h\subset\Re_{\h}$, frames $\{u_{n}\}_{n=1}^{k}$, $\{v_{n}\}_{n=1}^{k}$ for $\Re_{\K}$ and $\Re_{\h}$ respectively, with $P_{1}u_{n}=x_{n}$, $P_{2}v_{n}=y_{n}$  for each $n=1,\ldots,k$. Here $P_{1}:\Re_{\K}\To\K$, $P_{2}:\Re_{\h}\To\h$  are $J$-orthogonal projections. Define $\Re=\Re_{1}\times\Re_{2}$, by Proposition \ref{proposicionproductokrein} is a Pontryagin space with respect to the indefinite inner product $[\cdot,\cdot]_{\Re}=[\cdot,\cdot]_{\Re_{\K}}+[\cdot,\cdot]_{\Re_{\h}}$, where $\K\times\h\subset\Re$. If  $T_{\Re_{\K}}$ is the pre-frame operator of the family $\{u_{n}\}_{n=1}^{k}$ and $T_{\Re_{\h}}$ is  the pre-frame operator  of the family $\{v_{n}\}_{n=1}^{k}$, then,  taking  $0<B=\|T_{\Re_{\K}}\|+\|T_{\Re_{\h}}\|<\infty$ and $T_{\Re}:\mathfrak{C}^{k}\To\Re$, by
  $$T(\{c_{n}\}_{n=1}^{k})=\sum_{n=1}^{k}c_{n}(u_{n},v_{n})=\left(\sum_{n=1}^{k}c_{n}u_{n},\sum_{n=1}^{k}c_{n}v_{n}\right)=\left(T_{\Re_{\K}}(\{c_{n}\}_{n=1}^{k}),T_{\Re_{\h}}(\{c_{n}\}_{n=1}^{k})\right),$$
  we get
  \begin{align*}
   \|T_{\Re}(\{c_{n}\}_{n=1}^{k})\|_{J_{\Re}}^{2}&=\left[\left(T_{\Re_{\K}}(\{c_{n}\}_{n=1}^{k}),T_{\Re_{\h}}(\{c_{n}\}_{n=1}^{k})\right),\left(T_{\Re_{\K}}(\{c_{n}\}_{n=1}^{k}),T_{\Re_{\h}}(\{c_{n}\}_{n=1}^{k})\right)\right]_{J_{\Re}}\\
   &=\left[T_{\Re_{\K}}(\{c_{n}\}_{n=1}^{k}),T_{\Re_{\K}}(\{c_{n}\}_{n=1}^{k})\right]_{J_{\Re_{\K}}}+\left[T_{\Re_{\h}}(\{c_{n}\}_{n=1}^{k}),T_{\Re_{\h}}(\{c_{n}\}_{n=1}^{k})\right]_{J_{\Re_{\h}}}\\
   &=\|T_{\Re_{\K}}(\{c_{n}\}_{n=1}^{k})\|_{J_{\Re_{\K}}}^{2}+\|T_{\Re_{\h}}(\{c_{n}\}_{n=1}^{k})\|_{J_{\Re_{\h}}}^{2}\\
   &\leq B\,\|\{c_{n}\}_{n=1}^{k}\|_{J_{\Complex^{k}}}^{2}.
  \end{align*}
  Hence, $\|T_{\Re}\|\leq\,\sqrt{B}$. Furthermore, $T_{\Re}$ is surjective.  Thus, by Proposition \ref{preframeboundedsurjective}\linebreak the family  $\{z_{n}=(u_{n},v_{n})\}_{n=1}^{k}$  is a frame for $\Re$.  On the other hand, define the linear operator $P:\Re\To\K\times\h$ given by the formula $P(x,y)=(P_{1}x,P_{2}y)$, note that
  \begin{align*}
[P(x,y),(a,b)]_{\Re}&=[(P_{1}x,P_{2}y),(a,b)]_{\Re}=[P_{1}x,a]_{\Re_{\K}}+[P_{2}y,b]_{\Re_{\h}}=[x,P_{1}a]_{\Re_{\K}}+[y,P_{2}b]_{\Re_{\h}}\\
&=[(x,y),P(a,b)]_{\Re}.
\end{align*}
  Thus, $P^{[*]}=P$. Now, $P^{2}(x,y)=P(P_{1}x,P_{2}y)=(P_{1}^{2}x,P_{2}^{2}y)=(P_{1}x,P_{2}y)=P(x,y)$ para todo $(x,y)\in\Re$. Esto es, $P^{2}=P$, for which $P$ is a $J$-orthogonal projection from $\Re$ onto $\K\times\h$ and satisfies $Pz_{n}=(x_{n},y_{n})$ for each $n=1,\ldots,k$. We finish taking $P_{\K}(x,y)=(P_{1}x,0)$ and $P_{\h}(x,y)=(0,P_{2}y)$.
\end{proof}

\begin{proposition} [\bf{Couplers for frame operators}] \label{acopladorS} Let $(\K,[\cdot,\cdot]_{\K})$ and $(\h,[\cdot,\cdot]_{\h})$  be  finite-dimensional Pontryagin spaces with fundamental symmetries $J_{\K}$ and $J_{\h}$ respectively. Let $S_{\K}$ and $S_{\h}$ be  $J$-self-adjoint and positive operators with respect to $[\cdot,\cdot]_{J_{\K}}$, and $[\cdot,\cdot]_{J_{\h}}$ resp. Then, there exists a  finite-dimensional Pontryagin space $\Re$ with fundamental symmetry $J$, and a $J$-self-adjoint and positive operator $S_{\Re}:\Re\rightarrow\Re$, such that:
\begin{equation}
\left. S_{\Re}\right|_{\K}=S_{\K},\quad\text{and}\quad  \left.S_{\Re}\right|_{\h}=S_{\h}.
\end{equation}
\end{proposition}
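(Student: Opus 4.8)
The plan is to realize $\Re$ as the product Pontryagin space and $S_{\Re}$ as the direct sum of the two given operators. Concretely, set $\Re=\K\times\h$ equipped with the sesquilinear form $[\cdot,\cdot]_{\Re}=[\cdot,\cdot]_{\K}+[\cdot,\cdot]_{\h}$ and the fundamental symmetry $J=(J_{\K},J_{\h})$. By Proposition \ref{proposicionproductokrein} this is a Krein space, and since $\dim\K<\infty$ and $\dim\h<\infty$ we have $\min\{\dim\Re^{+},\dim\Re^{-}\}<\infty$, so $\Re$ is a finite-dimensional Pontryagin space. I then identify $\K$ and $\h$ with the $J$-orthogonal subspaces $\K\times\{0\}$ and $\{0\}\times\h$, and define $S_{\Re}(x,y)=(S_{\K}x,S_{\h}y)$ for $(x,y)\in\Re$.

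The key computation, which I would carry out first, is that the Hilbert inner product on $\Re$ splits additively: by the remark following Proposition \ref{proposicionproductokrein},
\[
[(x,y),(a,b)]_{J}=[J_{\K}x,a]_{\K}+[J_{\h}y,b]_{\h}=[x,a]_{J_{\K}}+[y,b]_{J_{\h}}.
\]
With this identity the $J$-self-adjointness of $S_{\Re}$ reduces to that of its components: using $[S_{\K}x,a]_{J_{\K}}=[x,S_{\K}a]_{J_{\K}}$ and the analogous relation for $S_{\h}$, one obtains
\[
[S_{\Re}(x,y),(a,b)]_{J}=[S_{\K}x,a]_{J_{\K}}+[S_{\h}y,b]_{J_{\h}}=[(x,y),S_{\Re}(a,b)]_{J},
\]
so $S_{\Re}=S_{\Re}^{*J}$. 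Positivity follows from the very same splitting, since
\[
[S_{\Re}(x,y),(x,y)]_{J}=[S_{\K}x,x]_{J_{\K}}+[S_{\h}y,y]_{J_{\h}}\geq 0,
\]
each summand being nonnegative by hypothesis.

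Finally I would verify the restriction property. The subspace $\K\times\{0\}$ is invariant under $S_{\Re}$ because $S_{\Re}(x,0)=(S_{\K}x,0)$, and under the identification $\K\cong\K\times\{0\}$ this restriction is exactly $S_{\K}$; symmetrically $\left.S_{\Re}\right|_{\h}=S_{\h}$. No step presents a genuine obstacle: the construction is essentially forced by the product structure already set up in Proposition \ref{proposicionproductokrein}, and the only points deserving care are bookkeeping the two embeddings so that \emph{restriction} means literally the component map, and establishing the additive decomposition of $[\cdot,\cdot]_{J}$ on which both the self-adjointness and the positivity verifications rest.
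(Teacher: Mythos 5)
Your proposal is correct and is essentially the paper's own proof: the same product space $\Re=\K\times\h$ from Proposition \ref{proposicionproductokrein}, the same fundamental symmetry $J=(J_{\K},J_{\h})$, and the same componentwise operator $S_{\Re}(x,y)=(S_{\K}x,S_{\h}y)$. The only difference is that you explicitly carry out the verifications (the additive splitting of $[\cdot,\cdot]_{J}$, the $J$-self-adjointness, the positivity, and the restriction property) that the paper merely asserts, which if anything makes your write-up more complete.
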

\begin{proof}
By Proposition \ref{proposicionproductokrein}  $\Re=\K\times\h$ is a Pontryagin space with indefinite inner product  $[(x,y),(a,b)]_{\Re}=[x,a]_{\K}+[y,b]_{\h}$ and fundamental symmetry given by the formula \linebreak$J_{\Re}(x,y)=\left(J_{\mathcal{K}}x,J_{\h}y\right), \,\forall\,(x,y)\in\Re.$
Define $S_{\Re}:\Re\To\Re$ by
\begin{equation}
S_{\Re}(x,y)=(S_{\K}x,S_{\h}y), \quad (x,y)\in \Re.
\end{equation}
This operator is positive and $J_{\Re}$-self-adjoint with respect to $[\cdot,\cdot]_{J_{\Re}}$. Furthermore
$$\left. S_{\Re}\right|_{\K}=(S_{\K},0),\quad\text{and}\quad  \left.S_{\Re}\right|_{\h}=(0,S_{\h}).$$
\end{proof}

\begin{remark}
If the operators $S_{\K}$ y $S_{\h}$ given at the proposition \ref{acopladorS} satisfy the conditions at the proposition \ref{construccion}, then the operator $S_{\Re}$
has an   associated frame and satisfies the properties given for the theorem \ref{levantamiento2}.
\end{remark}
\section*{Acknowledgements}
  The second author thanks the coauthors and to Ricardo Cede\~no for their hospitality during the stay in the city of Neiva. The second author is supported  by the pockets of co-authors and of course  by the Universidad Surcolombiana.



\begin{thebibliography}{99}

\bibitem{POK}Acosta-Hum\'anez, P., Esmeral, K., Ferrer O. \textit{Frames of subspaces in Hilbert spaces with $W$-metrics}, Analele Stiintifice ale Universitatii Ovidius Constanta, Accepted.

\bibitem{A-A} Adamjan, V.M., Arov, D.Z. \textit{On unitary couplings of semiunitary operators}. Am. Math. Soc., Translat., II. Ser. 95,
75-129 (1970), translation from Mat. Issled. 1, No.2, 3-64 (1966).

\bibitem{Azizov} T. Ya.~Azizov and I.~S. Iokhvidov, \textit{Linear operator in spaces with an indefinite metric}, Wiley-Interscience, Chichester, 1989.

\bibitem{Bognar}J. Bogn\'ar, \textit{Indefinite inner product spaces}, Springer Verlag, Berlin-Heidelberg, 1974.
\bibitem{Casazza}  Casazza, Peter G., \textit{The art of frame theory}, Taiwanese J. Math. \textbf{4} (2000), no. 2, 129-201.
\bibitem{CasazzaLeon} Casazza, Peter G. and Leon Manuel T., \textit{Existence and Construction of Finite Frames with a Given Frame Operator},  Int. J. Pure Appl. Math, Vol 63, \textbf{ 2}, (2010), 149-157.
\bibitem{Christensen} O. Christensen, \textit{An introduction to frames and Riesz bases}, Applied and Numerical Harmonic
Analysis, Birkh\"{a}auser, Boston, 2003.

\bibitem{Conway} Conway, J., \textit{A Course in Operator Theory, }American Mathemathical Society, Providence, Rhode Island, 2000. Cited in pages:


\bibitem{Dau}
I. Daubechies,
\textit{The wavelet transform, time-frequency localization and signal analysis},
IEEE Trans. Inform. Theory \textbf{36} (1990), 961--1005.

\bibitem{DGM}
I. Daubechies, A. Grossmann and Y. Meyer,
\textit{Painless nonorthogonal expansions},
J. Math. Phys. \textbf{27} (1986), 1271--1283.



\bibitem{Deguang}Deguang Han, Kornelson Keri, Larson David and Weber Eric, \textit{Frames For Undergraduates}, American Mathematical Society, Providence, Rhode Island, vol. 40, 2007.




\bibitem{DS}
R. J. Duffin and A. C. Schaeffer,
\textit{A class of nonharmonic Fourier series},
Trans. Amer. Math. Soc. \textbf{72} (1952), 341--366.

\bibitem{KEFER} K. Esmeral O.Ferrer and E. Wagner, \textit{Frames in Krein spaces Arising from a Non-regular $W$-metric}, Banach Journal In Mathematical Analysis.

\bibitem{G} P. G\u{a}vru\c{t}a, \textit{On the duality of fusion frames}. J. Math. Anal. Appl., \textbf{333} (2007), 871--879.
\bibitem{GMMM}
 J. I. Giribet, A. Maestripieri, F. Mart\'inez Per\'ia and P. Massey,
\textit{On frames for Krein spaces},
J. Math. Anal. Appl. \textbf{393} (2012), 122--137.

\bibitem{GMMMa}
J. I. Giribet, A. Maestripieri, F. Mart\'inez Per\'ia and P. Massey,
\textit{On a family of frames for Krein spaces},
	 arXiv:1112.1632v1.




\bibitem{PW}
I. Peng and S. Waldron,
\textit{Signed frames and Hadamard products of Gram matrices},
Linear Algebra Appl. \textbf{347} (2002), 131--157.


\bibitem{RNG}A Rahimi, A Najati, YN Dehghan, \textit{Continuous frames in Hilbert spaces}, Methods Funct. Anal. Topology, 2006.

\end{thebibliography}
\end{document}